\definecolor{darkblue}{rgb}{0,0,0.3}
\definecolor{urlblue}{rgb}{0,0,0.7}
\newtheorem{lemma}{Lemma}
\newtheorem{question}{Question}
\newtheorem{theorem}{Theorem}
\newtheorem{corollary}{Corollary}
\theoremstyle{definition}
\newtheorem{remark}{Remark}
\renewcommand{\leq}{\leqslant}
\renewcommand{\geq}{\geqslant}
\newcommand{\RR}{\mathbb{R}}
\renewcommand{\d}{\ensuremath{\mathrm d}}
\DeclareMathOperator{\Ric}{Ric}
\DeclareMathOperator{\biRic}{biRic}
\newcommand{\D}{\nabla}
\newcommand{\p}{\partial}
\renewcommand{\H}{\mathcal{H}}
\newcommand{\metric}[2]{\langle#1,#2\rangle}
\DeclareMathOperator{\vol}{vol}
\DeclareMathOperator{\diam}{diam}
\renewcommand{\tilde}{\widetilde}
\renewcommand{\bar}{\overline}
\newcommand{\preg}{\p^{\text{reg}}}
\newcommand{\psing}{\p^{\text{sing}}}
\DeclareMathOperator{\sgn}{sgn}
\renewcommand{\epsilon}{\varepsilon}
\newcommand{\TODO}[1]{}
\title[Diameter and volume (spectral) comparison and isoperimetry]{New spectral Bishop--Gromov and Bonnet--Myers theorems and applications to isoperimetry}
\author{Gioacchino Antonelli}
\address{Gioacchino Antonelli
\hfill\break Courant Institute Of Mathematical Sciences (NYU), 251 Mercer Street, 10012, New York, USA}
\email{gioacchinoantonelli3@gmail.com}
\author{Kai Xu}
\address{Kai Xu
\hfill\break Department of Mathematics, Duke University, Durham, NC 27708, USA,}
\email{kai.xu631@duke.edu}
\begin{document}
\begin{abstract}
    We show a sharp and rigid spectral generalization of the classical Bishop--Gromov volume comparison theorem: if a closed Riemannian manifold $(M,g)$ of dimension $n\geq3$ satisfies
    \[
    \lambda_1\left(-\frac{n-1}{n-2}\Delta+\Ric\right)\geq n-1,
    \]
     then $\operatorname{vol}(M)\leq\operatorname{vol}(\mathbb S^{n})$, and $\pi_1(M)$ is finite. The constant $\frac{n-1}{n-2}$ cannot be improved, and if $\vol(M)=\vol(\mathbb S^n)$ holds, then $M\cong \mathbb S^{n}$. A sharp generalization of the Bonnet--Myers theorem is also shown under the   same spectral condition.
     
     The proofs involve the use of a new unequally weighted isoperimetric problem, and unequally warped $\mu$-bubbles. As an application, in dimensions $3\leq n\leq 5$, we infer sharp results on the isoperimetric structure at infinity of complete manifolds with nonnegative Ricci curvature and uniformly positive spectral biRicci curvature. 
     
     Furthermore, the main result of this paper is applied in Mazet's recent solution of the stable Bernstein problem in $\mathbb R^6$.
\end{abstract}
\maketitle

\vspace{12pt}

\tableofcontents

\section{Introduction}

\subsection{Statement of the results}

The aim of this paper is to show a spectral generalization of both the classical Bishop--Gromov volume comparison theorem and Bonnet--Myers theorem, and derive some consequences. We stress that in this paper all manifolds $M$ are without boundary.

\begin{theorem}\label{cor:CLMSArbitraryDimension}
    Let $(M^n,g)$ be an $n$-dimensional compact smooth Riemannian manifold with $n\geq 3$, $\partial M=\emptyset$, and let $0\leq \gamma\leq\frac{n-1}{n-2}$, $\lambda>0$. We denote by $\mathrm{Ric}(x):=\inf_{v\in T_xM,\,|v|=1}\mathrm{Ric}_x(v,v)$ the smallest eigenvalue of the Ricci tensor. Assume there is a positive function $u\in C^\infty(M)$ such that
    \begin{equation}\label{eqn:EnhancedSpectralCondition}
        \gamma\Delta u\leq u\Ric-(n-1)\lambda u.
    \end{equation}
    Let $\tilde M$ be the universal cover of $M$, endowed with the pull-back metric. Then we have:
    \begin{enumerate}
        \item\label{2mainthm} A diameter bound
        \begin{equation}\label{eqn:BonnetMyers}
            \diam(\tilde M)\leq\frac{\pi}{\sqrt\lambda}\cdot\Big(\frac{\max(u)}{\min(u)}\Big)^{\frac{n-3}{n-1}\gamma},
        \end{equation}   
        in particular, $\pi_1(M)$ is finite.
        \item\label{1mainthm} A sharp volume bound
        \begin{equation}\label{eqn:VoluemControl}
            \mathrm{vol}(\tilde M)\leq\lambda^{-n/2}\mathrm{vol}(\mathbb S^n).
        \end{equation}
        Moreover, if equality holds in \eqref{eqn:VoluemControl}, then every function $u$ as in \eqref{eqn:EnhancedSpectralCondition} is constant, and $\tilde M$ is isometric to the round sphere of radius $\lambda^{-1/2}$.
    \end{enumerate}
\end{theorem}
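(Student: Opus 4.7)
The plan is to use a combination of warped $\mu$-bubbles (for part (1)) and an unequally weighted isoperimetric comparison (for part (2)), both exploiting $u$ as a conformal weight. Throughout, I would work on the universal cover $\tilde M$, on which \eqref{eqn:EnhancedSpectralCondition} persists for the pullback of $u$. The core technical device is the following: by using $u$ with \emph{different} exponents $\alpha\neq\beta$ in a weighted perimeter $\int_{\partial^*\Omega} u^{\alpha}\,dA$ versus a weighted volume $\int_\Omega u^{\beta}\,dV$, testing the second variation of the associated minimization problem with $u$ itself absorbs exactly the $\gamma\Delta u/u$ term that appears when one rearranges \eqref{eqn:EnhancedSpectralCondition} pointwise.

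For part (1), I would assume by contradiction that $\diam(\tilde M)>D:=\frac{\pi}{\sqrt\lambda}\bigl(\max u/\min u\bigr)^{\frac{n-3}{n-1}\gamma}$ and pick $p,q\in\tilde M$ with $d(p,q)>D$. In a slab between $p$ and $q$, consider the warped $\mu$-bubble functional
\[
\mathcal{A}_h(\Omega) := \int_{\partial^*\Omega} u^{\alpha}\,dA \;-\; \int_{\Omega} h\cdot u^{\beta}\,dV,
\]
where $\alpha,\beta$ are chosen depending on $n,\gamma$, and $h$ is a prescribed function built from the ODE profile of a model spherical $\mu$-bubble, arranged to blow up on the two slab boundaries so as to force a minimizer strictly in the interior. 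Standard existence/regularity (in low dimensions directly; in general via dimension reduction) produces a smooth stable minimizer $\Sigma$; its first variation yields a weighted mean-curvature identity $H_\Sigma+\alpha\langle\nu,\nabla\log u\rangle=h\,u^{\beta-\alpha}$. Plugging $u|_\Sigma$ into the stability inequality and using a Bochner-type decomposition on $\Sigma$ together with \eqref{eqn:EnhancedSpectralCondition}, one obtains an ODE inequality along the slab that is incompatible with its width $D$. This yields \eqref{eqn:BonnetMyers} and, in particular, compactness of $\tilde M$.

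For part (2), with $\tilde M$ now compact, I would consider the unequally weighted isoperimetric profile
\[
I_u(V) := \inf\Bigl\{\int_{\partial^*\Omega} u^{\alpha}\,dA : \int_\Omega u^{\beta}\,dV = V\Bigr\}
\]
with the same exponents $\alpha,\beta$. Minimizers exist and are smooth and stable (modulo the standard dimension reduction). Applying the stability inequality with $u|_\Sigma$ as test function and using \eqref{eqn:EnhancedSpectralCondition} produces the pointwise profile comparison $I_u(V)\geq I_{\mathbb{S}^n_\lambda}(V)$ with the round-sphere model. Since $I_u$ vanishes at $V=V_u(\tilde M)$, this forces $V_u(\tilde M)\leq V_u^{\max}(\mathbb{S}^n_\lambda)$, and tracking the relation between weighted and unweighted volumes recovers \eqref{eqn:VoluemControl}. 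In the equality case, every inequality in the argument becomes sharp: the residual non-negative quadratic form from the second variation must vanish, forcing $u$ constant; \eqref{eqn:EnhancedSpectralCondition} then reduces to $\Ric\geq(n-1)\lambda$ pointwise, and classical Bishop--Gromov rigidity identifies $\tilde M$ with the round sphere of radius $\lambda^{-1/2}$.

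The principal obstacle is the choice of exponents $\alpha,\beta$: they must be tuned so that testing the second variation with $u|_\Sigma$ isolates the $\gamma\Delta u/u$ term while leaving a residual quadratic form in $(\nabla u, H, \mathrm{II})$ that is non-negative. A direct calculation will show that non-negativity of this residual is equivalent to the constraint $\gamma\leq\frac{n-1}{n-2}$, which is precisely what pins down the sharp spectral constant. The correction $(\max u/\min u)^{\frac{n-3}{n-1}\gamma}$ in \eqref{eqn:BonnetMyers} then emerges from a logarithmic $u$-oscillation cost along the $\mu$-bubble slab; it vanishes in dimension $n=3$, where the weighted and unweighted problems coincide, and grows with the dimension otherwise.
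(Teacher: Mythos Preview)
Your overall strategy coincides with the paper's: unequally weighted $\mu$-bubbles for the diameter bound and an unequally weighted isoperimetric profile for the volume bound, with the key insight that perimeter and volume carry \emph{different} powers of $u$ (in the paper these are $u^\gamma$ on the boundary and $u^{2\gamma/(n-1)}$ on the bulk). The constraint $\gamma\le\frac{n-1}{n-2}$ does arise exactly as you say, from non-negativity of the residual quadratic form in the second variation.

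Two concrete corrections are needed, however. First, the test function in the second variation is not $u|_\Sigma$ but $\varphi=u^{-\gamma}$: this choice makes $u^\gamma\varphi^2=u^{-\gamma}$, so that after integration by parts the tangential Laplacian terms cancel and the term $\gamma u^{-1}\Delta u$ appears with the right sign to invoke \eqref{eqn:EnhancedSpectralCondition}. Testing literally with $u$ produces $u^{\gamma+2}$ weights and no such cancellation; the computation does not close. Second, the profile comparison goes the other way: from the second variation one obtains the viscosity inequality $II''\le -\frac{(I')^2}{n-1}-(n-1)\lambda$, and ODE comparison for $\psi=I^{n/(n-1)}$ (using the small-volume asymptotic $I(v)\sim n\vol(\mathbb B^n)^{1/n}v^{(n-1)/n}$, which requires the normalization $\min u=1$) gives $I_u\le I_{\text{model}}$, not $\ge$. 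The volume bound then follows because $I_u>0$ on $(0,V_0)$ while the model profile vanishes at $\lambda^{-n/2}\vol(\mathbb S^n)$; the step from weighted to unweighted volume is simply $\vol(\tilde M)\le\int_{\tilde M}u^{2\gamma/(n-1)}$ since $u\ge1$. Your stated inequality $I_u\ge I_{\text{model}}$ would not yield an upper bound on the weighted volume.
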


In the shorter range $0\leq \gamma < \frac{4}{n-1}$ when $n>3$, or in the same range $0\leq \gamma\leq 2$ when $n=3$, the compactness assumption of $M$ in \cref{cor:CLMSArbitraryDimension} is not needed since it is implied by the spectral condition \eqref{eqn:EnhancedSpectralCondition}. Moreover $\mathrm{diam}(M)$ is bounded from above by a constant only depending on $n,\gamma$. The latter assertions are a consequence of the works \cite{ShenYe, KaiIntermediate, CatinoRoncoroni}. So we have the following result.

\begin{corollary}\label{Corollary}
    Let $(M^n,g)$ be complete, and let $0\leq \gamma<\frac4{n-1}$ when $n>3$, or $0\leq\gamma\leq 2$ when $n=3$. Assume that there is $\lambda>0$ and a positive $u\in C^\infty(M)$ such that
    \begin{equation}\label{eqn:EnhancedSpectralConditionv2}
        \gamma\Delta u\leq u\Ric-(n-1)\lambda u.
    \end{equation}
    Then $M$ is compact, $\pi_1(M)$ is finite, and the assertions in \cref{2mainthm}, and \cref{1mainthm} in \cref{cor:CLMSArbitraryDimension} hold. In addition, there is a constant $C$ depending only on $n,\gamma$, such that $\diam(M)\leq C\lambda^{-1/2}$. In particular, $\diam(M)$ is bounded above independently of $u$.
\end{corollary}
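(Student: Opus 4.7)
The strategy is to combine Theorem \ref{cor:CLMSArbitraryDimension}, whose hypothesis requires compactness of $M$, with the cited literature which, in the restricted range of $\gamma$, already produces compactness of $M$ from the spectral condition \eqref{eqn:EnhancedSpectralConditionv2} alone. So the corollary will follow from two independent inputs glued together.

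First, I would rephrase the pointwise assumption \eqref{eqn:EnhancedSpectralConditionv2} as a variational spectral bound. Testing with $\varphi^2/u$ for $\varphi\in C_c^\infty(M)$, integrating by parts, and using the pointwise Cauchy--Schwarz inequality $2\varphi u^{-1}\D u\cdot\D\varphi\leq |\D\varphi|^2+\varphi^2 u^{-2}|\D u|^2$ yields
\[
\int_M\bigl(\gamma|\D\varphi|^2+\Ric\,\varphi^2\bigr)\,dv\ \geq\ (n-1)\lambda\int_M\varphi^2\,dv,
\]
i.e.\ $\lambda_1(-\gamma\Delta+\Ric)\geq(n-1)\lambda$ on $M$. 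This is the form of spectral lower bound that is analyzed in the cited works.

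Next, I would invoke \cite{ShenYe,KaiIntermediate,CatinoRoncoroni}, which jointly cover the range $0\leq\gamma<4/(n-1)$ for $n>3$ and $0\leq\gamma\leq 2$ for $n=3$, to conclude that any complete Riemannian manifold carrying such a spectral lower bound is necessarily compact, with an explicit diameter bound $\diam(M)\leq C(n,\gamma)\,\lambda^{-1/2}$ depending only on $n$ and $\gamma$. This is precisely the assertion the introduction attributes to those works; the underlying mechanism is a Bonnet--Myers-type argument along minimizing geodesics, enabled by the sub-criticality of $\gamma$ (the threshold $4/(n-1)$ appearing here is the same one that governs stability-type inequalities for minimal hypersurfaces).

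Once compactness and the $u$-independent diameter bound are in hand, Theorem \ref{cor:CLMSArbitraryDimension} applies directly and delivers the finiteness of $\pi_1(M)$, the sharp volume estimate \eqref{eqn:VoluemControl}, and the associated rigidity, while the $u$-dependent diameter bound \eqref{eqn:BonnetMyers} is automatically superseded by the $u$-independent bound from the previous step. The main point requiring attention is simply matching the exact normalizations of the curvature term, the coefficient $\gamma$, and the eigenvalue threshold between the pointwise form of the assumption used here and the (slightly different) spectral forms appearing in each of the three references; this is bookkeeping and does not introduce any new analytic difficulty.
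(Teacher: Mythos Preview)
Your proposal is correct and follows essentially the same two-step route as the paper: invoke the cited literature for compactness plus a uniform diameter bound, then feed compactness into Theorem~\ref{cor:CLMSArbitraryDimension}. Two small points the paper makes explicit that you gloss over: (i) one should check $\frac{4}{n-1}\leq\frac{n-1}{n-2}$ so that the restricted $\gamma$-range actually lies inside the hypothesis of Theorem~\ref{cor:CLMSArbitraryDimension}; (ii) the relevant result in \cite{KaiIntermediate} is proved via warped $\mu$-bubbles (not a geodesic second-variation argument) and is stated there only for $3\leq n\leq 7$, so for $n\geq 8$ one must note that the same argument goes through once singularities are handled as in Appendix~\ref{sec:Case8}. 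Your detour through the variational spectral formulation is harmless but unnecessary, since the cited compactness result already accepts the pointwise hypothesis \eqref{eqn:EnhancedSpectralConditionv2}.
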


\begin{remark}[Smoothing the eigenfunctions]\label{rmk:smoothing}
    Observe that in \eqref{eqn:EnhancedSpectralCondition} and \eqref{eqn:EnhancedSpectralConditionv2}, the function $u$ is assumed to be smooth. On the other hand, a natural condition that one usually encounters is the positivity of the Dirichlet energy
    \begin{equation}\label{eqn:PositiveSchrodinger}
        \int_M\gamma|\D\varphi|^2+\Ric \varphi^2\geq(n-1)\lambda\int_M\varphi^2,\qquad\forall\,\varphi\in C^\infty_c(M),
    \end{equation}
    for some $\gamma\geq 0$, $\lambda>0$.
    Since $\Ric$ is only a continuous function, the first eigenfunction of $-\gamma\Delta + \mathrm{Ric}$ need not be smooth. Nevertheless, we remark that all the conclusions of \cref{cor:CLMSArbitraryDimension} and \cref{Corollary} remain true under the (a prior weaker) condition \eqref{eqn:PositiveSchrodinger}. This is done by smoothing the first eigenfunction of $-\gamma\Delta + \mathrm{Ric}$, see \cref{rmk:smoothing_proof} for the proof. 
    
    We remark that, for example, condition \eqref{eqn:PositiveSchrodinger} is verified in the presence of small negative curvature wells, or under a Kato condition on $\mathrm{Ric}$: see \cite[Proposition 1.2]{ElworthySteven}, and \cite{CarronRose}, respectively. 
\end{remark}

\smallskip

Let us comment more thoroughly on the range of $\gamma$.
\begin{enumerate}[label=(\roman*), topsep=0pt, itemsep=0ex]

    \item The range $0\leq\gamma\leq\frac{n-1}{n-2}$ in \cref{cor:CLMSArbitraryDimension} is sharp, in the sense that for $\gamma>\frac{n-1}{n-2}$ none of the results there continue to hold. See \cref{rem:Supercritical} for a counterexample when $M$ is topologically $\mathbb S^1\times \mathbb S^{n-1}$. 

    \item When $n>3$, the range $0\leq \gamma < \frac{4}{n-1}$ is optimal to get a universal (i.e., only depending on $n,\gamma$) diameter bound from \eqref{eqn:EnhancedSpectralCondition}. In fact, in every dimension $n>3$, and for every $\gamma\in\big(\frac4{n-1},\frac{n-1}{n-2}\big]$, there is a compact $M^n$ that satisfies $\lambda_1(-\gamma\Delta+\Ric)\geq(n-1)$, but has arbitrarily large diameter. See \cref{rmk:no_uniform_BM} for the construction. 
        
    \item Let us emphasize the difference between \cref{cor:CLMSArbitraryDimension} and \cref{Corollary}: when $n>3$ and $\gamma\in\big[\frac4{n-1},\frac{n-1}{n-2}\big]$, the manifold $M$ satisfying the main spectral condition \eqref{eqn:EnhancedSpectralCondition} may not be compact (counterexamples include, for example, hyperbolic spaces). However, assuming that $M$ is compact, we have the volume and diameter bound, and the finiteness of $\pi_1(M)$, as a consequence of \cref{cor:CLMSArbitraryDimension}.

    On the other hand, the following conclusion can be drawn from the proof of \cref{cor:CLMSArbitraryDimension}: if the manifold $M$ in \cref{cor:CLMSArbitraryDimension} is assumed to be only complete, but the function $u$ in \eqref{eqn:EnhancedSpectralCondition} is uniformly bounded below and above, then $M$ must be compact as well, see \cref{lem:SecondPointDiameter}. Indeed, for the known noncompact examples of manifolds satisfying \eqref{eqn:EnhancedSpectralCondition} with $n>3$, and $\gamma\in\big[\frac4{n-1},\frac{n-1}{n-2}\big]$, the function $u$ either decays at infinity (e.g., hyperbolic spaces) or grows unboundedly at infinity (e.g., the examples in \cite[Subsection 3.3]{KaiIntermediate}).
\end{enumerate}
\smallskip

When $n=3$, \cref{1mainthm} in \cref{cor:CLMSArbitraryDimension} has appeared in \cite[Theorem 5.1]{CLMS}, as one step in the proof of the stable Bernstein Theorem in $\mathbb R^5$. After this paper was made public, the main \cref{cor:CLMSArbitraryDimension} (with $n=4$) together with the strategy in \cite{CLMS} has been exploited by L. Mazet \cite{Mazet} to resolve the stable Bernstein problem in $\RR^6$. We remark that the range of $\gamma$ appearing in \cref{cor:CLMSArbitraryDimension} is crucially related to the dimension restriction for the strategies used in \cite{CLMS, Mazet}. Hence, we have the following:

\begin{theorem}[Solution of the stable Bernstein problem]\label{thm:sB}
Let $M^n\hookrightarrow \mathbb R^{n+1}$, with $2\leq n\leq 5$,
be an immersed, complete, connected, two-sided, stable minimal hypersurface. Then $M$ is a Euclidean hyperplane.
\end{theorem}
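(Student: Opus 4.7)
The overall blueprint is a blow-down argument combined with the classification of stable minimal cones, in the spirit of the Chodosh--Li--Minter--Stryker proof of the stable Bernstein theorem in $\mathbb R^5$. There are three main phases: (i) establish Euclidean-type volume growth $\mathrm{vol}(M\cap B_R(p))\leq C R^5$ for large balls around a base point $p\in M$; (ii) extract a tangent cone at infinity and show it is a stable minimal $5$-cone in $\mathbb R^6$; (iii) classify such cones and conclude by uniqueness.

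The crucial input is in phase (i), and it is here that \cref{cor:CLMSArbitraryDimension} enters in dimension $n=4$ with the endpoint exponent $\gamma=\frac{n-1}{n-2}=\tfrac32$. The plan is to foliate a large region of $M$ by compact four-dimensional slices $\Sigma^4_t=\{f=t\}$ of a well-chosen proper function $f$ on $M$—for example, a capacity potential, a harmonic function, or a warped $\mu$-bubble, each adapted from \cite{CLMS} to one higher dimension. Using the stability inequality for $M^5\subset\mathbb R^6$ together with the Gauss equation relating $\Ric_\Sigma$ to $\Ric_M$ and to the second fundamental form of $M\hookrightarrow\mathbb R^6$, and combining with a Simons-type identity for $|A|^2$, the goal is to derive a spectral inequality of the form
\[
\int_\Sigma\Big(\tfrac{3}{2}|\D\psi|^2+\Ric_\Sigma\,\psi^2\Big)\,d\mathcal H^4 \;\geq\; 3\,\lambda(t)\int_\Sigma\psi^2, \qquad \forall\,\psi\in C^\infty_c(\Sigma),
\]
with $\lambda(t)>0$ of order $t^{-2}$. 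Applying \cref{cor:CLMSArbitraryDimension} (in the weak form \eqref{eqn:PositiveSchrodinger} enabled by \cref{rmk:smoothing}) to each $\Sigma^4_t$ then yields $\mathrm{vol}(\Sigma^4_t)\leq C\,\lambda(t)^{-2}$ and a uniform diameter bound, and the coarea formula integrated in $t$ produces the desired Euclidean volume growth of $M$.

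With Euclidean volume growth in hand, the blow-downs $r_i^{-1}M$ with $r_i\to\infty$ subconverge (as stationary integral varifolds) to a stable minimal cone $\mathcal C\subset\mathbb R^6$ with vertex at the origin. By Federer-type dimension reduction, the classification of $\mathcal C$ reduces to the classification of stable minimal cones in $\mathbb R^k$ for $k\leq 5$, which is already settled by the previously established stable Bernstein theorems: $\mathcal C$ must be a multiplicity-one hyperplane. A standard tangent-cone uniqueness argument together with Allard's regularity theorem then forces $M$ itself to be an affine hyperplane.

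The main technical obstacle is the derivation of the spectral inequality above on slices with the exactly critical coefficient $\gamma=\tfrac32$. This requires a tight combination of the stability inequality, the Gauss and Simons identities, and a carefully chosen conformal/warping factor on the slice: it is precisely because \cref{cor:CLMSArbitraryDimension} is sharp at $\gamma=\frac{n-1}{n-2}$ that this strategy can be pushed up by one dimension compared to \cite{CLMS}, while a weaker spectral Bishop--Gromov comparison (valid only in a smaller range of $\gamma$, as discussed in the remarks following \cref{Corollary}) would be insufficient to close the argument.
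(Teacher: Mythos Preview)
The paper does not contain a proof of this theorem: it is quoted as an external result from \cite{Mazet}, mentioned only to illustrate that \cref{cor:CLMSArbitraryDimension} (with $n=4$ and $\gamma=\tfrac{3}{2}$) was subsequently used by Mazet to resolve the stable Bernstein problem in $\mathbb{R}^6$. There is therefore no ``paper's own proof'' to compare your proposal against.

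That said, your outline is broadly faithful to the strategy of \cite{CLMS} as adapted in \cite{Mazet}: one uses stability together with Gauss/Simons identities to produce, on suitable $4$-dimensional slices, a spectral Ricci lower bound of the form $\lambda_1(-\gamma\Delta_\Sigma+\Ric_\Sigma)\geq c\,t^{-2}$; \cref{cor:CLMSArbitraryDimension} then gives sharp volume bounds on these slices, which integrate via coarea to Euclidean volume growth of $M$; the blow-down/cone-classification machinery then finishes the argument. You correctly identify that the endpoint value $\gamma=\tfrac{n-1}{n-2}=\tfrac32$ in dimension $n=4$ is precisely what is needed, and that this is why the sharp range in \cref{cor:CLMSArbitraryDimension} is decisive for pushing the CLMS strategy one dimension higher. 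The actual derivation of the spectral inequality on slices in \cite{Mazet} (cf.\ also the computations preceding \cref{existencegoodmububble} here, and \cite[Theorem~4.1]{Mazet}) is more delicate than your sketch suggests, but as a roadmap your proposal is accurate.
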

The case $n=2$ of \cref{thm:sB} was solved independently in \cite{DoCarmoPeng79, FischerColbrieSchoenCPAM, Pogorelov81}. Recently, the $n=3$ case was resolved by different methods in \cite{ChodoshLi4, ChodoshLiAlternative4, CatinoMastroliaRoncoroni}. The idea in \cite{ChodoshLiAlternative4} was pushed to $n=4$ and $n=5$ in the very recent works \cite{CLMS}, and \cite{Mazet}, respectively (all these proofs rely on \cite{SSY}). When $n\geq 7$, a counterexample to \cref{thm:sB} exists, and it can be constructed using \cite{BDG69}. As of today, the case $n=6$ remains open (see \cite{SS81} and \cite{Bellettini} for progress in this dimension).

\vspace{3pt}

Let us make some further connections and comparisons with the existing results.
\begin{enumerate}[label=(\roman*), topsep=2pt, itemsep=0ex]
    \item For the case $\gamma=0$: the volume bound in \cref{cor:CLMSArbitraryDimension} reduces to the Bishop--Gromov volume comparison theorem, and, in this case, our proof reduces to the isoperimetric comparison argument due to Bray \cite{BraySingular, BrayPhD}. The diameter bound in \cref{cor:CLMSArbitraryDimension} reduces to the Bonnet--Myers theorem, for which our proof provides (up to our knowledge) a new alternative $\mu$-bubble approach.
    
    \item \cref{2mainthm} in \cref{cor:CLMSArbitraryDimension} immediately implies the same upper bound for $\mathrm{diam}(M)$. Analogously, \cref{1mainthm} in \cref{cor:CLMSArbitraryDimension} implies the same (rigid) upper bound for $\vol(M)$.
    
    \item \cref{2mainthm} becomes $\diam(\tilde M)\leq\pi/\sqrt\lambda$ in dimension $n=3$, thus being a strict spectral generalization of the Bonnet--Myers theorem. The same diameter bound when $n=3$ has appeared with a different proof in \cite[Corollary 1]{ShenYe}.
    \item Based on a Bochner formula from \cite{LiWang}, the following vanishing theorem is proved in \cite{BourCarron}: if \eqref{eqn:EnhancedSpectralCondition} holds with $\gamma=\frac{n-1}{n-2}$ and $\lambda=0$, then we have either $b_1(M)=0$, or $M$ is isometric to a quotient of the warped product $(\mathbb R\times N,\mathrm{d}r^2+\eta(r)^2h)$, where $h$ is a metric on $N$ with $\Ric_h\geq0$. By inspecting the computations in \cite[p.107]{BourCarron} we can also obtain that: for the positive case $\lambda>0$ the only possibility is $b_1(M)=0$, and for sub-critical values $\gamma<\frac{n-1}{n-2}$ (with $\lambda=0$), the result strengthens to that either $b_1(M)=0$, or $M$ is a quotient of the direct product $(\mathbb R\times N, \mathrm{d}r^2+h)$.
    
    \item In the shorter range $0\leq \gamma\leq 1/(n-2)$, the conclusion about $\pi_1(M)$ being finite in \cref{cor:CLMSArbitraryDimension} has appeared in \cite[Theorem A(ii)]{CarronRose}. The proof given here is different, and the result in \cref{cor:CLMSArbitraryDimension} shows that that threshold $0\leq \gamma\leq 1/(n-2)$ was not sharp, as already guessed by the authors in \cite{CarronRose}. As already discussed above, the example in \cref{rem:Supercritical} shows that, in \cref{cor:CLMSArbitraryDimension}, $0\leq \gamma\leq \frac{n-1}{n-2}$ is the sharp range to get that $\pi_1(M)$ is finite.

    \item In dimension 2 (which the present paper did not cover), the volume bound follows directly from Gauss--Bonnet formula. A uniform diameter bound is proved in \cite[Theorem 1.4]{KaiSurfaces}, see also \cite{SchoenYauBlackHoles}. The parameter $\beta$ in \cite{KaiSurfaces} corresponds to $1/\gamma$, therefore, the generalized Bonnet--Myers theorem holds for $\gamma<4$ in dimension $n=2$.
    
    \item Diameter bounds in presence of spectral conditions as the one in \eqref{eqn:EnhancedSpectralCondition} have appeared in \cite[Corollary 1]{ShenYe}, and \cite[Theorem A(i)]{CarronRose}. See \cite[Theorem 1.1]{CatinoRoncoroni} for another Bonnet--Myers type theorem involving both spectral and Bakry--Emery Ricci lower bounds.

    \item After we posted our paper, \cite{JiaLi, ChuHao} has proved similar results for the case of manifolds with boundary and for Bakry-\'Emery curvature conditions, respectively.
\end{enumerate}
\smallskip

\cref{cor:CLMSArbitraryDimension} has consequences on the isoperimetric structure of spaces with nonnegative Ricci curvature and uniformly positive spectral biRicci curvature. Let us recall that, given orthonormal vectors $u,v\in T_pM$, 
\[
\mathrm{biRic}(u,v):=\mathrm{Ric}(u)+\mathrm{Ric}(v)-\mathrm{Sect}(u\wedge v).
\]
In dimension $3$, note that the biRicci curvature is half of the scalar curvature.

The biRicci curvature was first introduced and studied by Shen--Ye \cite{ShenYeDuke}. There it was shown that in ambient dimensions up to 5, stable minimal hypersurfaces in manifolds with $\mathrm{biRic}\geq n-2$ enjoy a uniform diameter bound. This was motivated by and generalizes the 3-dimensional result of Schoen--Yau \cite{SchoenYauBlackHoles}. The underlying idea of \cite{ShenYeDuke} is that the stable minimal surface in question satisfies the spectral control \eqref{eqn:PositiveSchrodinger} with $\gamma=1$, and a positive $\lambda>0$. Recently, the study of manifolds with biRicci (and more general intermediate-Ricci) curvature lower bounds has found applications in the solution of the stable Bernstein problem \cite{CLMS}, and in the study of a generalization of Geroch conjecture \cite{BrendleJohneHirsch}.

Before stating \cref{thm:isop}, recall that a Riemannian manifold $(M,g)$ has \textit{linear volume growth} if there is one $p\in M$ (and hence it is true for every $p\in M$) such that 
\[\limsup_{r\to+\infty}\frac{\vol(B_r(p))}{r}<+\infty.
\]
We define the \textit{isoperimetric profile} of $M$ by
\begin{equation}\label{eq:def_ip}
    I_M(v):=\inf\big\{|\p^*E|: E\subset\subset M,\,\vol(E)=v\big\},\qquad v\in[0,\vol(M)),
\end{equation}
where $\partial^*E$ denotes the reduced boundary of $E$. Let us denote 
\[
\mathrm{biRic}(x):=\inf_{v,w\in T_xM\,\text{orthonormal}}\mathrm{biRic}_x(v,w),
\] 
and let us say that, for constants $\gamma\geq 0,\lambda>0$, a manifold satisfies \emph{$\lambda_1(-\gamma\Delta+\biRic)\geq\lambda$ outside a compact set}, if there is a compact set $K\subset M$ such that
\[\int\gamma|\D\varphi|^2+\biRic\varphi^2\geq \lambda\int\varphi^2,\qquad\forall\varphi\in C^\infty_c(M\setminus K).\]

\begin{theorem}\label{thm:isop}
    Let $(M^n,g)$ be a smooth complete noncompact Riemannian manifold with $3\leq n\leq 5$, and let $0\leq \gamma<6-n$ (or $0\leq \gamma\leq 2$ when $n=4$). Assume that $\mathrm{Ric}\geq 0$ and $\lambda_1(-\gamma\Delta+\mathrm{biRic})\geq n-2$ outside a compact set. Assume further that $M$ has one end.\footnote{If not, then $M=\mathbb R\times X$, with $\lambda_1(-\gamma\Delta+\mathrm{Ric}_X)\geq n-2$. Thus $X$ is compact, and $\vol(X)\leq \vol(\mathbb S^{n-1})$ by \cref{Corollary}. Hence $M$ has linear volume growth, and $I_M(v)\leqslant 2\vol(X)$, with equality for all $v$ large enough.}
    Then the following hold.
    \begin{enumerate}
        \item\label{item1isop} $M$ has linear volume growth.
        \item\label{item2isop} The isoperimetric profile $I_M$ satisfies a sharp bound
        \begin{equation}\label{eqn:BoundOnProfile}
            I_M(v) \leq \vol(\mathbb S^{n-1}), \qquad \forall v>0.
        \end{equation}
        Moreover, equality of \eqref{eqn:BoundOnProfile} holds for some $v>0$ if and only if there is a bounded open set $\Omega\subset M$ such that $M\setminus\Omega$ is isometric to $\mathbb S^{n-1}\times [0,+\infty)$.
    \end{enumerate}
\end{theorem}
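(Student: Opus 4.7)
The plan is to apply \cref{cor:CLMSArbitraryDimension}/\cref{Corollary} to carefully chosen stable hypersurfaces $\Sigma\subset M$. The uniform area bound $\vol(\Sigma)\leq\vol(\mathbb S^{n-1})$ obtained this way will yield both the linear volume growth and the isoperimetric estimate.

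The core technical step is a spectral transfer: for a smooth two-sided stable (possibly constant-mean-curvature) hypersurface $\Sigma^{n-1}\subset M^n$ located far out in the end, so that $\Sigma$ sits in $M\setminus K$ and admits a wide embedded normal tube, the spectral $\biRic$ hypothesis on $M$ should induce a spectral condition of the form
\[
\lambda_1\bigl(-\gamma_\Sigma\Delta_\Sigma+\Ric^\Sigma\bigr)\geq n-2
\]
on $\Sigma$, with $\gamma_\Sigma$ in the range admissible for \cref{cor:CLMSArbitraryDimension} (or the 2D analogue from \cite{KaiSurfaces} in the case $n=3$). I would derive this by combining: (i) the stability inequality $\int_\Sigma(|A|^2+\Ric^M(\nu,\nu))u^2\leq\int_\Sigma|\nabla u|^2$; (ii) the Gauss equation $\Ric^\Sigma(e,e)=\biRic^M(e,\nu)-\Ric^M(\nu,\nu)+HA(e,e)-A^2(e,e)$; and (iii) the spectral $\biRic$ hypothesis on $M$ applied to normal extensions $\tilde u(x,t)=u(x)\chi(t)$ over a tube around $\Sigma$, whose width is controlled by the distance from $K$ and over which the asymptotic contribution from $|\chi'|^2/\chi^2$ vanishes as this distance grows. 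Absorbing $|A|^2$ via stability, together with the Simons-type inequality $|A|^2\geq\tfrac{n-1}{n-2}A^2(e,e)$ at an optimal direction $e$, then places $\gamma_\Sigma$ in the required range. The dimensional thresholds $\gamma_M<6-n$ (resp.\ $\gamma_M\leq 2$ when $n=4$) are precisely calibrated so that $\gamma_\Sigma$ lands inside the admissible range of \cref{cor:CLMSArbitraryDimension} applied to the compact hypersurface $\Sigma^{n-1}$. Applying the main theorem then gives $\vol(\Sigma)\leq\vol(\mathbb S^{n-1})$, together with the rigidity statement when equality holds.

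For \cref{item1isop}, I would use the unequally warped $\mu$-bubble machinery developed earlier in the paper to produce, for each sufficiently large $R$, a smooth closed stable prescribed-mean-curvature hypersurface $\Sigma_R\subset M\setminus K$ that separates $K$ from infinity and lies at distance $\sim R$ from $K$. The spectral transfer above then gives the uniform bound $\vol(\Sigma_R)\leq\vol(\mathbb S^{n-1})$; combined with $\Ric\geq 0$ (hence Bishop--Gromov on the end) through a coarea argument, this yields the linear volume growth of $M$. For \cref{item2isop}, once linear volume growth is established, isoperimetric minimizers $\Omega_v$ exist for every sufficiently large $v$ by the existence theory on complete manifolds with $\Ric\geq 0$ and linear volume growth. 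Their boundaries $\Sigma_v$ are smooth (since $n-1\leq 4$) stable CMC hypersurfaces; the spectral transfer applied to $\Sigma_v$ then gives $I_M(v)=\vol(\Sigma_v)\leq\vol(\mathbb S^{n-1})$ for all large $v$. To upgrade to all $v>0$, I would combine this large-$v$ bound with the concavity/monotonicity properties of $I_M$ for complete noncompact $\Ric\geq 0$ manifolds with one end and with the small-$v$ Euclidean-type isoperimetric estimate. For the rigidity in \cref{item2isop}, equality forces equality in \cref{cor:CLMSArbitraryDimension} applied to $\Sigma_{v_0}$, so $\Sigma_{v_0}\cong\mathbb S^{n-1}$; combined with the equality cases of the stability and Simons-type inequalities, a foliation of $M\setminus\Omega_{v_0}$ by equality-case isoperimetric boundaries yields the isometry $M\setminus\Omega\cong\mathbb S^{n-1}\times[0,\infty)$.

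The main obstacle is making the spectral transfer precise: correctly extracting the spectral inequality on $\Sigma$ from the spectral $\biRic$ hypothesis on $M$ by extending test functions over normal tubes whose width tends to infinity, carefully combining this with the Simons-type and CMC stability inequalities (and handling the mean-zero constraint in the CMC case), and verifying that the threshold $\gamma_M<6-n$ (resp.\ $\leq 2$ when $n=4$) is precisely what is needed for $\gamma_\Sigma$ to fall in the admissible range of \cref{cor:CLMSArbitraryDimension}. A secondary challenge is to arrange the $\mu$-bubble construction and the isoperimetric existence theory so that all the required geometric controls (smoothness of $\Sigma$, adequate focal radii, non-degeneracy in the existence argument) are simultaneously available.
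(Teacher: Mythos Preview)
Your high-level strategy is right, but the mechanism you propose for the ``spectral transfer'' is not the one that works, and two downstream steps rely on it in ways that break.

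\textbf{Spectral transfer.} You propose to extract a spectral Ricci condition on $\Sigma$ by plugging normal-tube extensions $\tilde u(x,t)=u(x)\chi(t)$ into the spectral $\biRic$ inequality on $M$. This runs into serious trouble: you need the tube width to tend to infinity to kill $\int|\chi'|^2/\int\chi^2$, but there is no a priori focal-radius control on $\Sigma$; the tube is not a Riemannian product, so the Laplacian and the curvature terms pick up second-fundamental-form corrections you have not accounted for; and what you recover is an inequality involving an average of $\biRic^M$ over the tube, whereas the Gauss equation relates $\Ric_\Sigma$ to $\biRic^M$ pointwise on $\Sigma$. The paper instead passes from the spectral hypothesis to a positive $u\in C^\infty(M\setminus K)$ with $-\gamma\Delta u+\biRic\,u\geq(n-2-\varepsilon/2)u$ (Fischer--Colbrie--Schoen), and then uses $u^\gamma$ as the \emph{weight} in the $\mu$-bubble functional $E(\Omega)=\int_{\partial^*\Omega}u^\gamma-\int(\chi_\Omega-\chi_{\Omega_0})hu^\gamma$. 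The second variation of $E$, tested with $\varphi=\psi u^{-\gamma/2}$ and combined with the pointwise inequality $|\mathrm{II}|^2+\Ric(\nu,\nu)\geq\biRic-\Ric_{\partial\Omega}+\tfrac{6-n}{4}H^2$, yields directly
\[
\int_{\partial\Omega}\frac{4}{4-\gamma}|\nabla_{\partial\Omega}\psi|^2+\Ric_{\partial\Omega}\psi^2\geq(n-2-\varepsilon)\int_{\partial\Omega}\psi^2.
\]
The new coefficient $\tfrac{4}{4-\gamma}$, not $\gamma$ itself, is what must lie in the admissible range $\leq\tfrac{n-2}{n-3}$ for \cref{cor:CLMSArbitraryDimension}; this is exactly the condition $\gamma<6-n$ (with equality allowed when $n=4$).

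\textbf{Linear volume growth.} A uniform \emph{area} bound on separating hypersurfaces $\Sigma_R$ does not yield linear volume growth by coarea: the $\Sigma_R$ are not level sets, and bounded perimeter of an annular region says nothing about its volume without diameter control. The paper uses the uniform \emph{diameter} bound on $\partial\Omega$ coming from \cref{Corollary}, together with Cheeger--Colding almost splitting (as in Chodosh--Li--Stryker), to bound $\vol(E_r\setminus E_{r+L})$.

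\textbf{Isoperimetric bound.} Applying the spectral transfer to boundaries of isoperimetric minimizers $\Omega_v$ is problematic: $\partial\Omega_v$ may intersect $K$ where the spectral $\biRic$ hypothesis fails, and the volume-constrained stability only gives the Jacobi inequality for mean-zero variations. The paper sidesteps this entirely: since $I_M$ is nondecreasing (because $\Ric\geq0$), it suffices to exhibit competitors of arbitrarily large volume and perimeter at most $\vol(\mathbb S^{n-1})+\delta$. The $\mu$-bubbles $\Omega$ constructed above (which lie in $M\setminus K$ by design and have connected boundary via a Mayer--Vietoris argument) serve as these competitors, and $|\partial\Omega|$ is bounded by \cref{cor:CLMSArbitraryDimension} applied to the connected hypersurface $\partial\Omega$. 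For rigidity the paper invokes the structure theory of \cite{XingyuZhu}; the foliation argument you sketch is recorded separately as an alternative.
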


We make the following observations on \cref{thm:isop}. 
\begin{enumerate}[label=(\roman*), topsep=0pt, itemsep=0ex]
    \item When $n=3$, observe that $\mathrm{biRic}=\mathrm{Scal}/2$. Thus, when $n=3$ and $\gamma=0$, \cref{thm:isop} extends to the noncompact case a former result of Eichmair \cite{EichmairIsop}.
    
    \item In dimension $n=3$, the structure of manifolds with $\mathrm{Ric}\geq 0$ and $\mathrm{Scal}\geq 2$ has been recently investigated in \cite{Zhu22, MunteanuWang22, ChodoshLiStryker, ZhuEtAl}, and very recently in \cite{WangRecent, WeiXuZhang} while we were completing this manuscript.
        
    In particular, when $n=3$ and $\gamma=0$, \cref{item1isop} of \cref{thm:isop} has been obtained in \cite[Theorem 5.6]{MunteanuWang22}, and generalized (with a different proof) in \cite[Theorem 1.1]{ChodoshLiStryker}. See also the recent \cite{WangRecent} for an effective version. 
       
    On the contrary, \cref{item2isop} of \cref{thm:isop}, which can be interpreted as the sharp isoperimetric control at infinity on the space, is new even when $n=3$ and $\gamma=0$. Up to the authors' knowledge, \cref{item1isop} of \cref{thm:isop} is new when $n\in \{4,5\}$, even when $\gamma=0$.
    For a question related to dimension $n\geq 6$, see \cref{quest:n6}.
    
    \item \cref{item2isop} of \cref{thm:isop} is linked to the sharp volume growth at infinity of the manifold, see \cref{rem:volumegrowth} for further details.
\end{enumerate}
\medskip

\subsection{Strategy of the proof}\label{Strategy}

Our proof of the volume bound follows the idea of isoperimetric comparison, due to Bray \cite{BrayPhD}. In \cite{Bayle04, BrayPhD}, generalizing the seminal contribution of \cite{BavardPansu86}, the authors show that in a compact manifold $M^n$ satisfying $\Ric\geq(n-1)\lambda$ with $\lambda\in\mathbb R$, the isoperimetric profile defined in \eqref{eq:def_ip} satisfies
\begin{equation}\label{eqn:SecondOrderODE}
I''_MI_M\leq-\frac{(I'_M)^2}{n-1}-(n-1)\lambda,
\end{equation}
in the viscosity sense. When $\lambda>0$, an ODE comparison can be applied to \eqref{eqn:SecondOrderODE} to obtain a sharp upper bound on the existence interval of $I_M$, and therefore an upper bound on $\vol(M)$. In \cite{CLMS} this argument is generalized to the spectral case in dimension 3: given a function $u$ satisfying \eqref{eqn:EnhancedSpectralCondition}, one considers the weighted isoperimetric profile
\[
I(v):=\inf\left\{\int_{\p^*E}u^\gamma: E\subset\subset M,\,\int_E u^\gamma=v\right\}.
\]
The same sharp inequality $I''I\leq-\frac12(I')^2-2\lambda$ is shown in \cite{CLMS}, which eventually leads to a sharp volume bound. In other dimensions, however, the inequality obtained through this argument requires the condition $\gamma<\frac4{n-1}$ (which is strictly narrower than $\gamma\leq\frac{n-1}{n-2}$ when $n>3$), and the volume bound is not sharp.

The main novelty of our approach is to consider the unequally weighted isoperimetric profile
\begin{equation}\label{eqn:UnequallyWighted}
I(v):=\inf\left\{\int_{\p^*E}u^\gamma: E\subset\subset M, \int_E u^{\frac{2\gamma}{n-1}}=v\right\},
\end{equation}
which, if \eqref{eqn:EnhancedSpectralCondition} is in force, satisfies the sharp inequality
\begin{equation}\label{eqn:viscosity_aux}
    I''I\leq-\frac{(I')^2}{n-1}-(n-1)\lambda.
\end{equation}
This leads to the proof of the sharp volume bound \eqref{eqn:VoluemControl}.
\vspace{3pt}

The diameter bound is obtained by employing the idea of $\mu$-bubbles originally due to Gromov \cite{Gromov_FourLectures}. In \cite{KaiIntermediate}, the second-named author considered a warped $\mu$-bubble obtained by minimizing a (suitably defined) energy
\[E(\Omega)=\int_{\p^*\Omega}u^\gamma-\int_\Omega hu^\gamma,\]
where $u$ is the function satisfying \eqref{eqn:EnhancedSpectralCondition}, and $h$ satisfies a certain differential inequality
\[|\D h|\leq C(n,\gamma)h^2+(n-1)\lambda.\]
Within the narrower range $0\leq \gamma<\frac4{n-1}$, this argument leads to the diameter bound appearing in \cref{Corollary}. We refer to \cite[Theorem 1.10]{KaiIntermediate} for more details.

In our case, to obtain \cref{2mainthm} of \cref{cor:CLMSArbitraryDimension} for the full range $0\leq\gamma\leq\frac{n-1}{n-2}$, we consider an unequally warped $\mu$-bubble defined by the functional
\[
E(\Omega)=\int_{\p^*\Omega}u^\gamma-\int_\Omega hu^{\frac{2\gamma}{n-1}},
\]
where now it turns out that $h$ needs to be chosen carefully so that
\[|\D h|u^{\gamma\frac{3-n}{n-1}}<(n-1)\lambda+\frac{h^2}{n-1}u^{2\gamma\frac{3-n}{n-1}}.\]
Under the situation of \cref{cor:CLMSArbitraryDimension}, especially since $u$ is bounded below and above uniformly, the latter argument leads to the diameter bound of $\tilde M$ stated in \eqref{eqn:BonnetMyers}. In particular, this implies $\widetilde M$ is compact and $\pi_1(M)$ is finite.
\smallskip

We remark that \cref{cor:CLMSArbitraryDimension} holds in all dimensions. Often in the literature, results that are proved through generalized minimal surfaces are stated up to dimension $n\leq7$, because of singularity formation in higher dimensions. In Appendix \ref{sec:Case8}, we give a detailed argument showing that our proofs extend to the singular case $n\geq8$ as well. The core content is to carry out the stability inequality for variations supported away from the singular set. We argued along the lines of similar approaches that appeared in \cite{Bayle04, BraySingular}.
\medskip

Finally, the proof of \cref{thm:isop} exploits \cref{cor:CLMSArbitraryDimension} and results and idea presented in \cite{ChodoshLiStryker, XingyuZhu, KaiIntermediate, CLMS}. Roughly speaking, within the range of dimension $3\leq n\leq 5$, and when $0\leq \gamma<6-n$ (or $0\leq \gamma\leq 2$ when $n=4$), the assumption $\lambda_1(-\gamma\Delta+\mathrm{biRic})\geq n-2$ implies that for every $\varepsilon>0$, one can find a stable warped $\mu$-bubble $\Omega$ such that
\begin{equation}\label{eqn:ControlMuBubble}
B_{\varepsilon^{-1}}(o)\subset \Omega, \qquad -\frac{4}{4-\gamma}\Delta_{\partial\Omega}+\mathrm{Ric}_{\partial\Omega}- (n-2-\varepsilon)\geq 0.
\end{equation}
Since $\mathrm{Ric}\geq 0$ (thus $b_1(M)<+\infty$) and $M$ has one end, the construction can be made such that $\partial\Omega$ is connected. Hence  \eqref{eqn:ControlMuBubble}, together with the volume bound in \cref{cor:CLMSArbitraryDimension}, implies that $|\p\Omega|\leq\vol(\mathbb S^{n-1})+o_\varepsilon(1)$. The conclusion about $\Omega$, coupled with the fact that $I_M$ is nondecreasing (since $\Ric\geq0$), essentially implies \eqref{eqn:BoundOnProfile}.

In higher dimensions $n\geq6$, the present $\mu$-bubble argument does not provide hypersurfaces satisfying the spectral condition \eqref{eqn:ControlMuBubble}. Thus, we conclude the introduction by asking the following question about \cref{thm:isop}.

\begin{question}\label{quest:n6}
    Let $n\geq 6$ be a natural number. Is it possible to construct a smooth complete noncompact Riemannian manifold $(M^n,g)$ such that $\mathrm{Ric}\geq 0$ everywhere, $\mathrm{biRic}\geq n-2$ outside a compact set, and 
    \begin{itemize}
        \item either $M$ doesn't have linear volume growth;
        \item or $\lim_{v\to+\infty}I_M(v)>\vol(\mathbb S^{n-1})$?
    \end{itemize}
\end{question}

In the very recent paper \cite{ZhuZhou} (appeared after the present paper was made public) the authors give a negative answer to \cref{quest:n6}.

\addtocontents{toc}{\protect\setcounter{tocdepth}{0}}

\subsection{Organization of the paper} 
In \cref{sec:bonnet_myers} we prove \cref{2mainthm} of \cref{cor:CLMSArbitraryDimension}. The main tool is \cref{lem:SecondPointDiameter}, which uses unequally warped $\mu$-bubbles. Then in \cref{rmk:no_uniform_BM}, we discuss the possibility of improving \eqref{eqn:BonnetMyers} in the range $\gamma\in\left[\frac{4}{n-1},\frac{n-1}{n-2}\right]$. In \cref{sec:VolumeBounds} we prove \cref{1mainthm} of \cref{cor:CLMSArbitraryDimension}. First, in \cref{lem:Viscosity} we derive the viscosity ODE \eqref{eqn:viscosity_aux} for the isoperimetric profile \eqref{eqn:UnequallyWighted}. Then the main result is proved by ODE comparison (see \cref{lem:AnotherComparison}). We also discuss the possibility of direct isoperimetry comparison on $\tilde M$ (see \cref{rmk:direct_isop}), the supercritical case $\gamma>\frac{n-1}{n-2}$ (see \cref{rem:Supercritical}), and we prove \cref{rmk:smoothing} in \cref{rmk:smoothing_proof}.

Then in \cref{sec:isop} we prove \cref{thm:isop}. We first produce large sets with small boundary areas by means of \cref{existencegoodmububble}, and then we prove the main ancillary \cref{lem:CentralLemmaForIsop}, from which the proof of \cref{thm:isop} follows. We also discuss the implications of the results in \cref{thm:isop} to sharp linear volume growth of the manifold at infinity in \cref{rem:volumegrowth}.

Finally, in \cref{sec:Case8} we present the proofs in the singular case $n\geq 8$.

\subsection{Notations}
All the manifolds in this paper are assumed to be smooth, complete, and without boundary. We will often omit the volume  measure $\mathrm{d}\mathrm{vol}$ and the area measure $\mathrm{d}\H^{n-1}$ in the integrals. We say that $A\subset\subset B$ if the closure of $A$ is relatively compact in $B$. For the theory of finite perimeter sets, we refer the reader to \cite{MaggiBook}. For a set $E$ of locally finite perimeter, we use $\p^*E$ to denote the reduced boundary of $E$. In dimensions 7 or below, all the minimizers obtained in this paper through Geometric Measure Theory techniques are actually smooth, so the reader may assume $\p^*E=\p E$.  

\subsection{Acknowledgments} 
G.A. acknowledges the financial support of the Courant Institute, the AMS-Simons Travel grant, and the NSF DMS Grant No. 2505713. K.X. would like to thank the hospitality of the Courant Institute, where part of this work was done during his visit. The authors are grateful to Chao Li, Xian-Tao Huang, Laurent Mazet, Luciano Mari, Marco Pozzetta, and Alberto Roncoroni for interesting discussions and suggestions on the topic of the paper. The authors are grateful to Otis Chodosh for comments that led to the improvement of a previous version of \cref{thm:isop}, and to Jintian Zhu for comments that led to \cref{rmk:no_uniform_BM}. 

The authors discussed \cref{thm:isop}, which motivated the present paper, when they were participating in the workshop ``Recent Advances in Comparison Geometry'' in Hangzhou. They thank the Banff International Research Station and the Institute of Advanced Studies in Mathematics for this enriching opportunity.

\addtocontents{toc}{\protect\setcounter{tocdepth}{2}}

\section{Unequally warped \texorpdfstring{$\mu$}{μ}-bubbles and diameter bounds}\label{sec:bonnet_myers}

Let $n\geq 3$, and let $(M^n,g)$ be a complete Riemannian manifold, not necessarily compact. Let $u\in C^\infty(M)$ be a positive function. Given a parameter $0\leq \gamma\leq \frac{n-1}{n-2}$, we fix 
$$
\alpha:=\frac{2\gamma}{n-1}.
$$ 
Assume that $u\in C^\infty(M)$ is positive and such that 
\begin{equation}\label{eqn:MainSpectral}
-\gamma\Delta u+\Ric u\geq(n-1)\lambda u.
\end{equation}
Let $\Omega_-\subset\Omega_+\subset M$ be two domains with nonempty boundaries, such that $\bar{\Omega_+}\setminus\Omega_-$ is compact. Suppose $h\in C^\infty(\Omega_+\setminus\bar{\Omega_-})$ satisfies
\begin{equation}\label{eqn:condition_for_h}
    \lim_{x\to\p\Omega_-}h(x)=+\infty,\qquad
    \lim_{x\to\p\Omega_+}h(x)=-\infty\qquad\text{uniformly.}
\end{equation}
For an arbitrary fixed domain $\Omega_0$ with $\Omega_-\subset\subset\Omega_0\subset\subset\Omega_+$, consider the functional
\begin{equation}\label{eqn:MuBubbleFunctional}
    E(\Omega):=\int_{\p^*\Omega}u^\gamma-\int(\chi_\Omega-\chi_{\Omega_0})hu^\alpha,
\end{equation}
defined on sets of finite perimeter, where $\p^*\Omega$ is the reduced boundary of $\Omega$.
By a slight variant of \cite[Proposition 2.1]{JintianZhu} there always exists a set of finite perimeter $\Omega$, with $\Omega_-\subset\subset\Omega\subset\subset\Omega_+$, which is a minimizer of the energy $E$. Assume, for the purpose of the following computations, that $\Omega$ has smooth boundary.

Let $\nu$ denote the outer unit normal at $\p\Omega$, and let $\varphi\in C^\infty(M)$. Let $H,\mathrm{II}$ be the mean curvature and second fundamental form of $\partial\Omega$, computed with respect to $\nu$. For a function $f\in C^\infty(M)$ we denote $f_\nu:=\langle \nabla f,\nu\rangle$. For an arbitrary smooth variation $\{\Omega_t\}_{t\in(-\varepsilon,\varepsilon)}$ with $\Omega_0=\Omega$ and variational field $\varphi\nu$ at $t=0$, we compute the first variation
\begin{equation}\label{eq:mububble_H}
    0=\frac{\mathrm{d}}{\mathrm{d}t}E(\Omega_t)\Big|_{t=0}=\int_{\p\Omega}\big(H+\gamma u^{-1}u_\nu-hu^{\alpha-\gamma}\big)u^\gamma\varphi.
\end{equation}
Since $\varphi$ is arbitrary we have $H=hu^{\alpha-\gamma}-\gamma u^{-1}u_\nu$. Then, computing the second variation,
\[\begin{aligned}
    0 &\leq \frac {\mathrm{d}^2}{\mathrm{d}t^2}E(\Omega_t)\Big|_{t=0}
    = \int_{\p\Omega}\Big[-\Delta_{\p\Omega}\varphi-|\text{II}|^2\varphi-\Ric(\nu,\nu)\varphi-\gamma u^{-2}u_\nu^2\varphi \\
    &\hspace{130pt} +\gamma u^{-1}\varphi\big(\Delta u-\Delta_{\p\Omega}u-Hu_\nu\big)-\gamma u^{-1}\metric{\D_{\p\Omega}u}{\D_{\p\Omega}\varphi} \\
    &\hspace{130pt} -h_\nu u^{\alpha-\gamma}\varphi+(\gamma-\alpha)hu^{\alpha-\gamma-1}u_\nu\varphi\Big]u^\gamma\varphi.
\end{aligned}\]
Setting $\varphi=u^{-\gamma}$, using our main assumption \eqref{eqn:MainSpectral}, and integrating by parts, we have
\[\begin{aligned}
    0 &\leq \int_{\p\Omega} -|\text{II}|^2u^{-\gamma}-\Ric(\nu,\nu)u^{-\gamma}-\gamma u^{-2-\gamma}u_\nu^2+\gamma u^{-1-\gamma}\Delta u-\gamma u^{-1-\gamma}\Delta_{\p\Omega}u \\
    &\qquad\qquad -\gamma Hu^{-1-\gamma}u_\nu-\gamma u^{-1}\metric{\D_{\p\Omega}u}{\D_{\p\Omega}(u^{-\gamma})}-h_\nu u^{\alpha-2\gamma}+(\gamma-\alpha)hu^{\alpha-2\gamma-1}u_\nu \\
    &\leq \int_{\p\Omega}u^{-\gamma}\Big[-\frac{H^2}{n-1}-(n-1)\lambda-\gamma(u^{-1}u_\nu)^2 \\
    &\hspace{65pt} -\gamma H(u^{-1}u_\nu)+|\D h|u^{\alpha-\gamma}+(\gamma-\alpha)hu^{\alpha-\gamma}(u^{-1}u_\nu)\Big].
\end{aligned}\]
Setting $X=hu^{\alpha-\gamma}$ and $Y=u^{-1}u_\nu$ (so $H=X-\gamma Y$), we have
\begin{align}
    &\begin{aligned}
    0 &\leq \int_{\p\Omega} u^{-\gamma}\Big[-\frac{X^2}{n-1}+\frac{2\gamma}{n-1}XY-\frac{\gamma^2}{n-1}Y^2-(n-1)\lambda \\
    &\qquad\qquad -\gamma Y^2-\gamma(X-\gamma Y)Y+|\D h|u^{\alpha-\gamma}+(\gamma-\alpha)XY\Big]
    \end{aligned} \label{eq:mububbleineq}\\
    &\ \leq \int_{\p\Omega}u^{-\gamma}\Big[
        -\frac{X^2}{n-1}-(n-1)\lambda+|\D h|u^{\alpha-\gamma}
        +\Big(\frac{2\gamma}{n-1}-\alpha\Big)XY
        +\Big(\frac{n-2}{n-1}\gamma^2-\gamma\Big)Y^2\Big]. \nonumber
\end{align}
Recall that $\alpha=\frac{2\gamma}{n-1}$ and $0\leq\gamma\leq\frac{n-1}{n-2}$. Therefore, if $h$ satisfies
\begin{equation}\label{eqn:h_must_satisfy}
    |\D h|u^{\alpha-\gamma}<\frac{h^2u^{2\alpha-2\gamma}}{n-1}+(n-1)\lambda,
\end{equation}
then we obtain a contradiction. We are thus ready to prove the following.
\begin{lemma}\label{lem:SecondPointDiameter}
    Let $(M^n,g)$ be a complete Riemannian manifold with $n\geq 3$, and let $0\leq \gamma\leq \frac{n-1}{n-2}$, $\lambda>0$. Assume there is $u\in C^\infty(M)$ such that $\inf(u)>0$, $\sup(u)<+\infty$, and  
    \[-\gamma\Delta u+\Ric u\geq(n-1)\lambda u.\]
    Then
    \begin{equation}\label{eqn:BonnetMyersv2}
        \diam(M)\leq\frac{\pi}{\sqrt\lambda}\cdot\Big(\frac{\sup(u)}{\inf(u)}\Big)^{\frac{n-3}{n-1}\gamma}.
    \end{equation}
    In particular, $M$ must be compact.
\end{lemma}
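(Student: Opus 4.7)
The plan is to argue by contradiction, invoking the unequally warped $\mu$-bubble machinery set up immediately before the statement. Suppose, for contradiction, that
\[L_0 := \frac{\pi}{\sqrt\lambda}\cdot\Big(\frac{\sup u}{\inf u}\Big)^{\frac{n-3}{n-1}\gamma} < \diam(M).\]
Pick $p, q \in M$ with $d(p,q) > L_0$, let $\rho(x) := d(x,p)$, and fix $L$ and $\delta > 0$ with $L - 2\delta > L_0$ and $L < d(p,q)$. Set $\Omega_- := B_\delta(p)$ and $\Omega_+ := \{\rho < L - \delta\}$; by Hopf--Rinow, $\overline{\Omega_+}$ is compact. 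By the derivation of \eqref{eq:mububbleineq} from the spectral hypothesis, it suffices to exhibit $h \in C^\infty(\Omega_+\setminus\overline{\Omega_-})$ with the boundary blow-up behavior \eqref{eqn:condition_for_h} and \emph{strictly} satisfying \eqref{eqn:h_must_satisfy} pointwise; such an $h$ will force the resulting $\mu$-bubble minimizer's second variation to be simultaneously $\geq 0$ and strictly negative. The contradiction will yield $\diam(M) \leq L_0$, and then $M$ is compact by completeness.

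\smallskip

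To construct $h$, I rescale $u$ (using invariance of \eqref{eqn:MainSpectral} under positive scaling) so that $\inf u = 1$, and write $K := \sup u \geq 1$. Setting $\beta := \alpha - \gamma = -\gamma(n-3)/(n-1) \leq 0$, one has $u^\beta \leq 1$ and $u^{2\beta} \geq K^{2\beta}$ pointwise, so \eqref{eqn:h_must_satisfy} follows from the simpler
\[|h'(\rho)| < \frac{h^2 K^{2\beta}}{n-1} + (n-1)\lambda.\]
I take the explicit model
\[h(\rho) := -(n-1)\omega K^{-2\beta}\tan\bigl(\omega(\rho - \delta) - \pi/2\bigr),\]
with $\omega < \sqrt\lambda K^\beta$ and $L - 2\delta = \pi/\omega$, which is feasible precisely because $L - 2\delta > L_0 = \pi/(\sqrt\lambda K^\beta)$. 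A direct computation using $\sec^2 = 1 + \tan^2$ yields
\[|h'(\rho)| = (n-1)\omega^2 K^{-2\beta} + \frac{h^2 K^{2\beta}}{n-1},\]
and the condition $\omega^2 < \lambda K^{2\beta}$ provides exactly the strict inequality sought.

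\smallskip

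The main technical obstacle is the non-smoothness of $\rho = d(\cdot,p)$ across the cut locus of $p$, so that $h\circ\rho$ is only Lipschitz a priori. I would handle this by replacing $\rho$ by a smooth approximation $\tilde\rho$ on $\overline{\Omega_+}\setminus\Omega_-$ satisfying $\sup|\tilde\rho - \rho| \leq \varepsilon$ and $|\D\tilde\rho| \leq 1 + \varepsilon$ (standard convolution or Greene--Wu smoothing, applied away from $p$); the ensuing factor $|\D h| \leq |h'(\tilde\rho)|(1+\varepsilon)$ is absorbed by decreasing $\omega$ slightly, leaving the strict inequality in force. A secondary (standard) issue is the regularity of $\mu$-bubble minimizers when $n \geq 8$, which is handled by the singular arguments in Appendix~\ref{sec:Case8}.
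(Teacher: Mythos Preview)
Your proposal is correct and follows essentially the same approach as the paper: both argue by contradiction, reduce the pointwise inequality \eqref{eqn:h_must_satisfy} to a simpler ODE by using the two-sided bounds on $u$ (your $u^\beta\le1$ and $u^{2\beta}\ge K^{2\beta}$ are exactly the paper's use of $\inf(u)^{\alpha-\gamma}$ and $\sup(u)^{2\alpha-2\gamma}$), and then insert an explicit cotangent profile of a smoothed distance function, noting that the singular case $n\ge8$ is deferred to Appendix~\ref{sec:Case8}. The only cosmetic difference is that the paper normalizes the cotangent with $\sqrt{D/C}$ and $\sqrt{CD}/(1+\varepsilon)$ rather than introducing a free parameter $\omega$, and arranges the smoothing so that the boundary value on $\partial\Omega_-$ is exactly zero (you should likewise ensure your $\tilde\rho$ hits the correct level sets, e.g.\ by redefining $\Omega_\pm$ via $\tilde\rho$), but the content is the same.
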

\begin{proof}[Proof $(n\leq7)$]
    For the ease of readability, we give the proof assuming $n\leq 7$. The case $n\geq 8$, where one needs to deal with the possible singularity of minimizers, is postponed to \cref{sec:Case8}. Set $\alpha:=\frac{2\gamma}{n-1}$ as usual (note that $\alpha-\gamma\leq0$). First, notice that if $h$ is a smooth function on $M$ such that
    \begin{equation}\label{eqn:EQNONH}
        |\nabla h|<\frac{\sup(u)^{2\alpha-2\gamma}}{(n-1)\inf(u)^{\alpha-\gamma}}h^2+\frac{(n-1)\lambda}{\inf(u)^{\alpha-\gamma}}=:Ch^2+D,
    \end{equation}
    then 
    \begin{equation}\label{eqn:AbsurdMuBubble}
    \begin{aligned}
        |\nabla h|u^{\alpha-\gamma} &\leq |\nabla h|\inf(u)^{\alpha-\gamma} < \frac{h^2u^{2\alpha-2\gamma}}{n-1}+(n-1)\lambda.\\
    \end{aligned}
    \end{equation}
    Moreover, notice that
    \begin{equation}\label{eqn:CD}
        CD=\lambda\cdot \left(\frac{\sup(u)}{\inf(u)}\right)^{2\frac{3-n}{n-1}\gamma}.
    \end{equation}
    Suppose by contradiction \eqref{eqn:BonnetMyersv2} is false. Then there is $\varepsilon>0$ such that
    \begin{equation}\label{eqn:BonnetMyersv2Not}
        \diam(M)>\frac{\pi}{\sqrt\lambda}\cdot\Big(\frac{\sup(u)}{\inf(u)}\Big)^{\frac{n-3}{n-1}\gamma}\cdot(1+\varepsilon)^2+2\varepsilon.
    \end{equation}
    Let us now fix a point $p\in M$ realizing $\mathrm{diam}(M)$. Take $\Omega_-:=B_\varepsilon(p)$, and let $d:M\setminus\Omega_-\to\mathbb R$ be a smoothing of $d(\cdot,\p\Omega_-)$ such that
    \[d|_{\p\Omega_-}\equiv0, \quad
        |\nabla d|\leq 1+\varepsilon,\quad
        d\geq\frac{d(\cdot,\p\Omega_-)}{1+\varepsilon}.\]
    It can be verified by direct computations that \begin{equation}\label{eqn:definitionh}
    h(x):=\sqrt{\frac{D}{C}}\cot\left(\frac{\sqrt{CD}}{1+\varepsilon}d(x)\right)
    \end{equation}
    satisfies \eqref{eqn:EQNONH}, and thus \eqref{eqn:AbsurdMuBubble}. Now we have that
    \begin{equation}\label{eqn:TheSetO}
        \mathcal{O}:=\Big\{d>\frac{(1+\varepsilon)\pi}{\sqrt{CD}}\Big\}\supset\Big\{d(\cdot,p)>\varepsilon+\frac{(1+\varepsilon)^2\pi}{\sqrt{CD}}\Big\}\ne\emptyset
    \end{equation}
    due to \eqref{eqn:BonnetMyersv2Not}. Set $\Omega_+:=M\setminus\overline{\mathcal{O}}$. Thus we have found two domains $\Omega_-\subset\subset\Omega_+\subset\subset M$ and $h(x)\in C^\infty(\Omega_+\setminus\bar{\Omega_-})$ which satisfy \eqref{eqn:condition_for_h}. Let $\Omega$ be an unequally warped $\mu$-bubble related to the functional \eqref{eqn:MuBubbleFunctional}. Since $n\leq 7$, by classical results in Geometric Measure Theory (see \cite[Theorem 27.5]{MaggiBook}, and \cite[Theorem 28.1]{MaggiBook}) we have that $\Omega$ has smooth boundary. Since \eqref{eqn:AbsurdMuBubble} (hence \eqref{eqn:h_must_satisfy}) is in force, we get a contradiction with the minimizing property of $E$, as shown in the computations of the second variation of $E$ right before the Lemma.
\end{proof}

\begin{proof}[Proof of \cref{2mainthm} of \cref{cor:CLMSArbitraryDimension}]
    Let $\pi:\tilde M\to M$ be the universal cover of $M$. Set $\tilde g=\pi^*g$ and $\tilde u:=u\circ\pi$, thus we have
    \[
    \gamma \tilde{\Delta} \tilde u \leq \tilde u \widetilde{\mathrm{Ric}}-(n-1)\lambda\tilde u.
    \]
    The assertion directly comes from \cref{lem:SecondPointDiameter}.
\end{proof}

\begin{remark}[No universal diameter bound]\label{rmk:no_uniform_BM}     Suppose $n\geq4$ and $\frac4{n-1}<\gamma\leq\frac{n-1}{n-2}$, which is the range included in \cref{cor:CLMSArbitraryDimension} but not in \cref{Corollary}. Thus, we only obtain the diameter bound \eqref{eqn:BonnetMyers} depending on the maximum and minimum of $u$. Here let us show that, even assuming $M$ is closed, there is no universal diameter upper bound on $M$ (that depends only on $n,\gamma$).
    
    Fix $L\gg1$. Fix a cutoff function $\eta_0$ so that $\eta_0|_{[-\infty,0)}\equiv0$ and $\eta_0|_{[1,\infty)}\equiv1$. Let $\delta,\mu>0$ be constants to be chosen. Let $\eta$ be the cutoff function such that:

    \begin{enumerate}[label=(\arabic*), topsep=0pt, itemsep=0.2ex]
        \item $\eta\equiv0$ on $(-\infty,-L-\mu]\cup[-\delta,\delta]\cup[L+\mu,+\infty)$ and $\eta\equiv1$ on $[-L,-2\delta]\cup[2\delta,L]$,
        \item on each interval $[-L-\mu,-L]$, $[-2\delta,-\delta]$, $[\delta,2\delta]$, $[L,L+\mu]$, the function $\eta$ interpolates between 0 and 1 according to the model function $\eta_0$.
    \end{enumerate}
    Set $\alpha=\frac{2\gamma}{n-1}$ as usual. Choose constants $a,b>0$ such that
    \begin{equation}\label{eq:const_ab}
        -\frac{a^2}{n-1}-\Big(\gamma-\frac{n-2}{n-1}\gamma^2\Big)\frac{b^2}{(\gamma-\alpha)^2}-(n-1)+ab=0.
    \end{equation}
    The existence of such numbers follows by calculating the discriminant:
    \[\begin{aligned}
        \exists\,a,b>0\text{ satisfying \eqref{eq:const_ab}}\ \ &\Leftrightarrow\ \ \frac{4}{n-1}\cdot\Big(\gamma-\frac{n-2}{n-1}\gamma^2\Big)\frac{1}{(\gamma-\alpha)^2}<1 \\
        &\Leftrightarrow\ \ \gamma>\frac4{n-1},
    \end{aligned}\]
    and the second line is exactly our assumption. From the sign of the terms in \eqref{eq:const_ab}, it follows that we can choose $a,b$ to be positive.
    
    Now we let $u(r)$ solve the differential equation
    \begin{equation}\label{eq:ODEu}
        \left\{\begin{aligned}
        & u'(r) = \eta(r)\sgn(r)\cdot\frac{b}{\gamma-\alpha}u(r),\\
        & u(0)=1.
    \end{aligned}\right.
    \end{equation}
    The solution is positive, even, and smooth on the entire $\RR$. Also, note that $u\equiv1$ in $[-\delta,\delta]$, and $u$ is constant on $(-\infty,-L-\mu]\cup[L+\mu,+\infty)$. Then let $h=h(r)$ solve
    \begin{equation}\label{eq:ODEh}
        \left\{\begin{aligned}
            & h'u^{\alpha-\gamma}=-\frac{h^2u^{2\alpha-2\gamma}}{n-1}-\Big(\gamma-\frac{n-2}{n-1}\gamma^2\Big)\frac{(u')^2}{u^2}-(n-1), \\
            & h(0)=0.
        \end{aligned}\right.
    \end{equation}
    Since the solution is an odd function, below we only discuss within the range $r\geq0$ (and the case $r<0$ follows by symmetry). We claim that there is a choice of $\delta$, such that the solution of \eqref{eq:ODEu} \eqref{eq:ODEh} satisfies $h(2\delta)u(2\delta)^{\alpha-\gamma}=-a$. To see this, we set a new function $Q=hu^{\alpha-\gamma}$, and thus \eqref{eq:ODEh} is converted to
    \begin{equation}\label{eq:ODE_for_Q}
        \left\{\begin{aligned}
            & Q'=-\frac{Q^2}{n-1}-\Big(\gamma-\frac{n-2}{n-1}\gamma^2\Big)\frac{\eta^2b^2}{(\gamma-\alpha)^2}-(n-1)-\eta bQ, \\
            & Q(0)=0.
        \end{aligned}\right.
    \end{equation}
    When $\delta\to0$ the solution satisfies $Q(2\delta)\to0$. On the other hand, note that $\eta=0$ on $[0,\delta]$, thus in this interval we have $Q'=-\frac{Q^2}{n-1}-(n-1)$. Thus there exists $\delta_0>0$ such that, when $\delta\nearrow\delta_0$ we have $Q(2\delta)\to-\infty$. By continuity, there exists a $\delta$ as claimed.

    Next, note that $Q\equiv-a$ is a solution of \eqref{eq:ODE_for_Q} in $[2\delta,L]$, due to our choice \eqref{eq:const_ab} and $\eta\equiv1$. By uniqueness of ODE, for our specific choice of $\delta$ the solution of \eqref{eq:ODE_for_Q} must satisfy $Q\equiv-a$ on $[2\delta,L]$. Thus for \eqref{eq:ODEh}, we have $h=-au^{\gamma-\alpha}$ on $[2\delta,L]$.

    Next, by continuity, there is a sufficiently small $\mu$ such that the solution of \eqref{eq:ODEh} exists on $[-L-\mu,L+\mu]$. When $r\geq L+\mu$, \eqref{eq:ODEh} becomes
    \[
    h'u(L+\mu)^{\alpha-\gamma}=-\frac{u(L+\mu)^{2\alpha-2\gamma}}{n-1}h^2-(n-1),
    \]
    thus the solution is
    \[
    h(r)=-(n-1)u(L+\mu)^{-(\alpha-\gamma)}\cot(r_0-r),
    \]
    for some $r_0>L+\mu$. To summarize, the equation \eqref{eq:ODEh} has a solution $h=h(r)$ on a maximal interval $(-r_0,r_0)\supset[-L,L]$, and $h$ blows up like $-(n-1)u(L+\mu)^{-(\alpha-\gamma)}\cot|r_0\mp r|$ near the endpoints $\pm r_0$.
    
    Finally, we let $f(r)$ solve the ODE
    \begin{equation}\label{eqn:ODEf}
        \left\{\begin{aligned}
            & (n-1)\frac{f'}{f}=hu^{\alpha-\gamma}-\gamma u^{-1}u', \\
            & f(0)=1.
        \end{aligned}\right.
    \end{equation}
    This equation, along with \eqref{eq:ODEh}, come from analyzing the equality case in the computations from \eqref{eq:mububble_H} to \eqref{eqn:h_must_satisfy}. Note that $f$ is even, since $u$ is even and $h$ is odd. When $r\geq L+\mu$, the ODE becomes
    \[(n-1)\frac{f'(r)}{f(r)}=-(n-1)\cot(r_0-r)\ \ \Rightarrow\ \ f(r)=c\sin(r_0-r)\ \ \text{for some $c>0$.}\]
    For $\epsilon\ll1$ consider the metric $g=\mathrm{d} r^2+\epsilon^2f(r)^2g_{\mathbb S^{n-1}}$. By the asymptotic of $f$ near $\pm r_0$, this represents a metric on $\mathbb S^{n}$ with acute cone singularities at the two poles $r=\pm r_0$. Note that $\Ric(\p_r,\p_r)=-(n-1)f''/f$, and $\Ric(e,e)$ (where $e$ is tangent to $\mathbb S^{n-1}$) grows like $r^{-2}$ near the poles. So we may choose $\varepsilon$ so small such that $\Ric(\p_r,\p_r)$ is the minimal eigenvalue of $\Ric$ (see, e.g., the argument at the beginning of \cref{rem:Supercritical}).
    
    From the ODEs \eqref{eq:ODEh}, \eqref{eqn:ODEf} it can be directly verified that
    \[
    -\gamma\Big[\frac{u''}{u}+(n-1)\frac{f'}{f}\frac{u'}{u}\Big]-(n-1)\frac{f''}f=(n-1).
    \]
    This implies $-\gamma\Delta u + u\Ric(\p_r,\p_r) = (n-1)u$.

    Since $g$ is conic and $u$ is constant near the pole, we can smooth the metric near the pole, while not decreasing the minimal eigenvalue of $\Ric$. As a consequence, we still have $-\gamma\Delta u+\Ric u\geq(n-1)u$ in the smoothed metric. This implies $\lambda_1(-\gamma\Delta+\Ric)\geq n-1$. On the other hand, note that $\diam(M,g)>2L$, which can be made arbitrarily large.
\end{remark}

\section{Unequally weighted isoperimetric profile and volume bounds}\label{sec:VolumeBounds}

Let $n\geq 3$, and $(M^n,g)$ be a complete Riemannian manifold. Let $0\leq \gamma\leq \frac{n-1}{n-2}$, and set
$$
\alpha:=\frac{2\gamma}{n-1}.
$$ Set the weighted volume $V_0:=\int_M u^\alpha\in(0,\infty]$, and define the unequally weighted isoperimetric profile
\begin{equation}\label{eq:weighted_ip}
    I(v):=\inf\left\{\int_{\p^* E}u^\gamma:\ E\subset\subset M\ \text{has finite perimeter, and}\ \int_E u^\alpha=v\right\},
\end{equation}
for all $v\in[0,V_0)$. Here $\partial^* E$ denotes the reduced boundary of $E$.

\begin{lemma}\label{lem:Viscosity}
    Let $\gamma,\alpha,V_0,I(v)$ be as above, and let $\lambda>0$. Suppose $M^n$ is complete. Assume $u\in C^\infty(M)$ satisfies $\inf(u)=1$ and
    \begin{equation}\label{eq:aux1}
    \gamma\Delta u\leq u\Ric-(n-1)\lambda u.
    \end{equation}
    Suppose for a fixed $v_0\in(0,V_0)$, there exists a bounded set $E$ with finite perimeter, such that $\int_E u^\alpha=v_0$ and $\int_{\p^* E}u^\gamma=I(v_0)$. Then $I$ satisfies
    \begin{equation}\label{eq:viscosity_aux}
        I''I\leq-\frac{(I')^2}{n-1}-(n-1)\lambda
    \end{equation}
    in the viscosity sense at $v_0$.
\end{lemma}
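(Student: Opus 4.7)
The plan is to follow the Bavard--Pansu/Bayle viscosity strategy for isoperimetric profiles, adapted to the unequally weighted setting by reusing the second-variation computation already carried out in \cref{sec:bonnet_myers}. In dimensions $n\leq 7$ the minimizer $E$ has smooth reduced boundary by standard regularity theory for minimizers of weighted perimeters (the case $n\geq 8$ is postponed to \cref{sec:Case8}). The first variation of the weighted perimeter subject to the weighted-volume constraint yields the Euler--Lagrange equation $H+\gamma u^{-1}u_\nu=\Lambda u^{\alpha-\gamma}$ on $\p E$ for some Lagrange multiplier $\Lambda\in\RR$.

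To test the viscosity inequality at $v_0$, I would fix a smooth $\psi$ with $\psi\geq I$ near $v_0$ and $\psi(v_0)=I(v_0)$, and consider the normal variation $\{E_t\}$ of $E$ with speed $\varphi:=u^{-\gamma}$ on $\p E$ (the same choice as in the $\mu$-bubble computation of \cref{sec:bonnet_myers}). Setting $v(t):=\int_{E_t}u^\alpha$ and $A(t):=\int_{\p^* E_t}u^\gamma$, the definition of $I$ forces $A(t)\geq I(v(t))\geq \psi(v(t))$ with equality at $t=0$, so $A-\psi\circ v$ attains a local minimum $0$ there. First-order matching $A'(0)=\psi'(v_0)v'(0)$, together with the Euler--Lagrange equation and $v'(0)=\int_{\p E}u^{\alpha-\gamma}>0$, gives $\psi'(v_0)=\Lambda$; second-order matching then yields $\psi''(v_0)\,v'(0)^2\leq A''(0)-\Lambda v''(0)$.

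The key observation is that when $h$ is replaced by the constant $\Lambda$ in the $\mu$-bubble functional \eqref{eqn:MuBubbleFunctional}, we get $E(\Omega)=A(\Omega)-\Lambda v(\Omega)+\text{const}$, so $A''(0)-\Lambda v''(0)$ is exactly the second variation carried out in the chain from \eqref{eq:mububble_H} to \eqref{eq:mububbleineq}. Reusing that expansion with $|\D h|\equiv 0$, invoking $|\mathrm{II}|^2\geq H^2/(n-1)$, the spectral hypothesis \eqref{eq:aux1}, and the fact that the coefficient $\gamma\bigl(\tfrac{n-2}{n-1}\gamma-1\bigr)$ of the $Y^2=(u^{-1}u_\nu)^2$ term is nonpositive throughout $0\leq\gamma\leq\frac{n-1}{n-2}$, I would obtain
\[
A''(0)-\Lambda v''(0) \leq -\frac{\Lambda^2}{n-1}\int_{\p E}u^{2\alpha-3\gamma} - (n-1)\lambda\int_{\p E}u^{-\gamma}.
\]

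Combined with the second-order matching and $I(v_0)=\int_{\p E}u^\gamma$, the desired viscosity inequality $\psi''(v_0)I(v_0)\leq -\Lambda^2/(n-1)-(n-1)\lambda$ reduces to the two Cauchy--Schwarz-type integral bounds
\[
\Big(\int_{\p E}u^{\alpha-\gamma}\Big)^2 \leq \int_{\p E}u^\gamma\cdot\int_{\p E}u^{2\alpha-3\gamma}, \quad \Big(\int_{\p E}u^{\alpha-\gamma}\Big)^2 \leq \int_{\p E}u^\gamma\cdot\int_{\p E}u^{-\gamma}.
\]
The first is Cauchy--Schwarz with $f=u^{\gamma/2},\,g=u^{\alpha-3\gamma/2}$. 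The second I would derive from the Cauchy--Schwarz bound $\bigl(\int u^{\alpha-\gamma}\bigr)^2 \leq \int u^{-\gamma}\cdot\int u^{2\alpha-\gamma}$ coupled with the pointwise inequality $u^{2\alpha-\gamma}\leq u^\gamma$, which holds because the normalization $\inf(u)=1$ gives $u\geq 1$ and $2\alpha-\gamma\leq\gamma$ (equivalently $\alpha\leq\gamma$) for every $n\geq 3$. The main obstacle I expect is the second-variation bookkeeping: the many terms generated by varying a weighted perimeter must collapse, thanks to the sharp value $\alpha=\frac{2\gamma}{n-1}$ and the spectral hypothesis, into the clean two-term integrand displayed above; the normalization $\inf(u)=1$ is what closes the circle at the last Cauchy--Schwarz step, which would otherwise fail by a factor $u^{2\alpha-2\gamma}$.
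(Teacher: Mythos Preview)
Your approach is essentially the paper's: same variation $\varphi=u^{-\gamma}$, same use of the spectral hypothesis and $|\mathrm{II}|^2\geq H^2/(n-1)$, same endgame via Cauchy--Schwarz and the normalization $u\geq1$. Your observation that $A''(0)-\Lambda v''(0)$ is exactly the second variation of the $\mu$-bubble functional \eqref{eqn:MuBubbleFunctional} with constant $h=\Lambda$ is a clean shortcut; the paper recomputes the variation from scratch but the algebra is identical. At the last step the paper first uses the pointwise bound $u^{-\gamma}\geq u^{2\alpha-3\gamma}$ to merge the two integrals and then applies one H\"older inequality, whereas you keep them separate and apply two---both are fine.

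One slip: you declare $\psi\geq I$ but then invoke $A(t)\geq I(v(t))\geq\psi(v(t))$, which needs $\psi\leq I$. With a test function touching from \emph{below}, your chain holds, $A-\psi\circ v$ has a local minimum at $t=0$, and the rest goes through. Alternatively, drop the arbitrary $\psi$ altogether: since $v'(0)=\int_{\p E}u^{\alpha-\gamma}>0$, the map $t\mapsto v$ is a local diffeomorphism, so $A$ is a smooth function of $v$ with $A\geq I$ and $A(v_0)=I(v_0)$; your bound on $A''(0)-\Lambda v''(0)$ then becomes, via the chain rule, $A''(v_0)A(v_0)\leq-\tfrac{A'(v_0)^2}{n-1}-(n-1)\lambda$. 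That is precisely the upper-barrier formulation the paper uses to conclude.
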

\begin{proof}[Proof $(n\leq7)$]
    For the ease of readability, we present the proof here in the case $n\leq 7$. When $n\geq 8$ one again needs to deal with the possible singularity of minimizers, and we postpone the resolution of this to \cref{sec:Case8}.

    We recall that \eqref{eq:viscosity_aux} holds in the viscosity sense at $v_0$ if the following holds: for every smooth function $\varphi$ defined in some neighborhood $\mathcal{I}_{v_0}\ni v_0$, such that $\varphi(v_0)=I(v_0)$ and $\varphi(x)\leq I(x)$ in $\mathcal{I}_{v_0}$, we have
    \begin{equation}\label{eqn:ViscosityToShow}
    -\varphi''(v_0)\varphi(v_0)\geq (n-1)\lambda+\frac{\varphi'(v_0)^2}{n-1}.
    \end{equation}
    
    The viscosity inequality follows by finding an upper barrier of the isoperimetric profile satisfying \eqref{eqn:ViscosityToShow}, and the latter is done by computing the second variation of the weighted area. Notice that by classical results in Geometric Measure Theory (see, e.g., \cite[Section 3.10]{MorganRegular}), the set $E$ in the statement has smooth boundary, since $n\leq 7$. For a bounded set $F\subset M$ with smooth boundary, define the weighted volume and perimeter
    \[V(F):=\int_Fu^\alpha,\qquad A(F):=\int_{\p F}u^\gamma.\]
    
    For $\varphi\in C^\infty(M)$, let us consider a smooth family of sets $\{E_t\}_{t\in(-\varepsilon,\varepsilon)}$, such that $E_0=E$, the variational vector field $X_t$ along $\p E_t$ at $t=0$ is $\varphi\nu$ (where $\nu$ denotes the outer unit normal of $\p E_t$), and $\nabla_{X_t}X_t = (\varphi\varphi_\nu)\nu$ at $t=0$. Define $V(t):=V(E_t)$ and $A(t):=A(E_t)$; thus $V(0)=v_0$, $A(0)=I(v_0)$. Let $H$ be the mean curvature of $\partial E$, and denote $\varphi_\nu:=\metric{\D\varphi}{\nu}$, $u_\nu:=\metric{\D u}{\nu}$ for brevity. We compute the first and second variations of the (weighted) volume at the initial time:
    \begin{equation}\label{eqn:VariationsVolume}
        \frac{\d V}{\d t}(0)=\int_{\p E} u^\alpha\varphi,\qquad \frac{\d ^2V}{\d t^2}(0)=\int_{\p E}(H+\alpha u^{-1}u_\nu)u^\alpha\varphi^2+u^\alpha\varphi\varphi_\nu,
    \end{equation}
    where the term $u^\alpha\varphi\varphi_\nu$ in the last equality appears due to our assumption have that the acceleration $\nabla_{X_t}X_t$ is $(\varphi\varphi_\nu)\nu$. Next, compute the first variation of the (weighted) area:
    \begin{equation}\label{eqn:FirstVariationOfArea}
        \frac {\d A}{\d t}(0)=\int_{\p E} u^\gamma\varphi(H+\gamma u^{-1}u_\nu)=\int_{\p E} u^\alpha\varphi\cdot u^{\gamma-\alpha}(H+\gamma u^{-1}u_\nu).
    \end{equation}
    Finally, the second variation of the area is
    \begin{equation}\label{eqn:SecondVariation}
        \begin{aligned}
            \frac {\d^2 A}{\d t^2}(0) &= \int_{\p E} \Big(-\Delta_{\p E}\varphi-\Ric(\nu,\nu)\varphi-|\mathrm{II}|^2\varphi\Big)u^\gamma\varphi \\
        &\qquad\qquad +\Big(-\gamma u^{-2}u_\nu^2\varphi+\gamma u^{-1}\D^2u(\nu,\nu)\varphi-\gamma u^{-1}\metric{\D_{\p E} u}{\D_{\p E}\varphi}\Big)u^\gamma\varphi \\
        &\qquad\qquad +\left(\gamma u^{\alpha-1}u_\nu\varphi^2+u^{\alpha}\varphi\varphi_\nu +Hu^{\alpha}\varphi^2\right) u^{\gamma-\alpha}(H+\gamma u^{-1}u_\nu),
        \end{aligned}
    \end{equation}
    where the term $u^\alpha\varphi\varphi_\nu$ in the last line again appears from evaluating $\D_{X_t}X_t$, as noted above. Since $E$ is a volume-constrained minimizer, we must have $\frac{\mathrm{d}A}{\mathrm{d}t}(0)=0$ whenever $\frac{\mathrm{d}V}{\mathrm{d}t}(0)=0$. Hence $u^{\gamma-\alpha}(H+\gamma u^{-1}u_\nu)$ is constant due to \eqref{eqn:FirstVariationOfArea}.
    
    From now on, we fix the choice $\varphi=u^{-\gamma}$ in the variation. Since $V(t)$ is strictly monotone in $t$ in a neighbourhood of 0, we may view $A$ as a smooth function in $V$, defined in some neighborhood of $v_0$. The value of the constant $u^{\gamma-\alpha}(H+\gamma u^{-1}u_\nu)$ is thus obtained through the chain rule:
    \begin{equation}\label{eqn:DefineA'}
        A'(v_0)=\frac{\d A}{\d V}(v_0)=\frac{\frac{\d A}{\d t}(0)}{\frac{\d V}{\d t}(0)}=\frac{\int_{\p E} u^\alpha\varphi\cdot u^{\gamma-\alpha}(H+\gamma u^{-1}u_\nu)}{\int_{\p E} u^\alpha\varphi}=u^{\gamma-\alpha}\left(H+\gamma u^{-1}u_\nu\right).\ 
    \end{equation}
    Observe that
    \begin{equation}\label{eqn:NoNonlinearity}
    \gamma u^{\alpha-1}u_\nu\varphi^2=\gamma u^{\alpha-2\gamma-1}u_\nu,\qquad
    u^\alpha\varphi\varphi_\nu=-\gamma u^{\alpha-2\gamma-1}u_\nu. 
    \end{equation}
    Applying our choice and \eqref{eqn:NoNonlinearity} to \eqref{eqn:SecondVariation}, we obtain
    \[\begin{aligned}
        \frac {\d^2 A}{\d t^2}(0) &= \int_{\p E} -\Delta_{\p E}(u^{-\gamma})-\Ric(\nu,\nu)u^{-\gamma}-|\mathrm{II}|^2u^{-\gamma}-\gamma u^{-2-\gamma}u_\nu^2 \\
        &\qquad\qquad +\gamma u^{-\gamma-1}\Big(\Delta u-\Delta_{\p E} u-Hu_\nu\Big)-\gamma u^{-1}\metric{\D_{\p E} u}{\D_{\p E} (u^{-\gamma})} \\
        &\qquad\qquad +\left(Hu^{\alpha-2\gamma}\right)u^{\gamma-\alpha}(H+\gamma u^{-1}u_\nu).
    \end{aligned}\]
    Integrating by parts and re-grouping we have
    \[\begin{aligned}
        \frac{\d^2 A}{\d t^2}(0) &= \int_{\p E} -\Ric(\nu,\nu)u^{-\gamma}+\gamma u^{-\gamma-1}\Delta u 
        + \Big[\gamma(-\gamma-1)+\gamma^2\Big]|\D_{\p E} u|^2u^{-\gamma-2} \\
        &\qquad\qquad -|\mathrm{II}|^2u^{-\gamma}-\gamma u^{-\gamma-2}u_\nu^2-\gamma Hu^{-\gamma-1}u_\nu 
        + Hu^{-\gamma}(H+\gamma u^{-1}u_\nu). 
    \end{aligned}\]
    For convenience, set $X=u^{\alpha-\gamma}A'(v_0)$, $Y=u^{-1}u_\nu$. Thus $H=X-\gamma Y$ by \eqref{eqn:DefineA'}. Using the main condition \eqref{eq:aux1}, the fact $\gamma(-\gamma-1)+\gamma^2\leq 0$, the trace inequality $|\mathrm{II}|^2\geq H^2/(n-1)$, we reduce the above inequality to
    \begin{equation}\label{eqn:EstimateSecondVariation}
        \begin{aligned}
            \frac{\d^2 A}{\d t^2}(0) &\leq \int_{\p E} -(n-1)\lambda u^{-\gamma}+u^{-\gamma}\Big[-\frac{X^2}{n-1}+\frac{2\gamma XY}{n-1}-\frac{\gamma^2 Y^2}{n-1}-\gamma Y^2 \\
            &\qquad\qquad -\gamma XY+\gamma^2 Y^2+X^2-\gamma XY\Big].
        \end{aligned}
    \end{equation}
    Next, by using the chain rule and the formulae for the derivatives of the inverse function, \begin{equation}\label{eqn:ChainRule}
        A''(v_0)= \left(\frac{\d V}{\d t}(0)\right)^{-2}\frac{\d^2 A}{\d t^2}(0) - \left(\frac{\d V}{\d t}(0)\right)^{-3}\frac{\d A}{\d t}(0)\cdot \frac{\d^2 V}{\d t^2}(0).
    \end{equation}
    Thus, joining \eqref{eqn:ChainRule}, \eqref{eqn:EstimateSecondVariation}, \eqref{eqn:VariationsVolume}, and \eqref{eqn:FirstVariationOfArea} we get, calling $Q:=\int_{\p E} u^{\alpha-\gamma}$,
    \begin{align}
        Q^2&\cdot A''(v_0) \leq \int_{\p E} -(n-1)\lambda u^{-\gamma}+u^{-\gamma}\Big[-\frac{X^2}{n-1}+\frac{2\gamma XY}{n-1}-\frac{\gamma^2 Y^2}{n-1}-\gamma Y^2 \nonumber\\
        &\qquad\qquad\qquad\qquad -\gamma XY+\gamma^2 Y^2+X^2-\gamma XY\Big] \nonumber\\
        &\qquad\qquad\quad - Q^{-1}
            \cdot \Big(\int_{\p E}u^{\alpha-\gamma}A'(v_0)\Big)
            \cdot \Big(\int_{\p E} (X+\alpha Y-2\gamma Y)u^{\alpha-2\gamma}\Big) \nonumber\\
        & = \int_{\p E} -(n-1)\lambda u^{-\gamma}+u^{-\gamma}\Big[-\frac{X^2}{n-1}+\frac{2\gamma XY}{n-1}-\frac{\gamma^2 Y^2}{n-1} \nonumber\\
        &\qquad\qquad -\gamma Y^2-\gamma XY+\gamma^2 Y^2+X^2-\gamma XY-X^2-\alpha XY+2\gamma XY\Big] \nonumber\\
        &=\int_{\p E} u^{-\gamma}\left[-\frac{X^2}{n-1}+ \left(\frac{2\gamma}{n-1}-\alpha\right)XY+\left(\frac{n-2}{n-1}\gamma^2-\gamma\right)Y^2\right] -(n-1)\lambda u^{-\gamma} \nonumber\\
        &\leq \int_{\p E} -\frac{X^2}{n-1}u^{-\gamma} -(n-1)\lambda u^{-\gamma} \label{eq:aux3}\\
        &= \int_{\p E}-\frac{A'^2(v_0)}{n-1}u^{2\alpha-3\gamma}-(n-1)\lambda u^{-\gamma}, \nonumber
    \end{align}
    where in \eqref{eq:aux3} we used $\alpha=\frac{2\gamma}{n-1}$, and $0\leq \gamma\leq\frac{n-1}{n-2}$. Notice further that $u^{-\gamma}\geq u^{2\alpha-3\gamma}$, which is implied by $\alpha\leq\gamma$ and $u\geq1$. Thus we obtain
    \begin{equation}
        Q^2A''(v_0) \leq -\Big(\frac{A'^2(v_0)}{n-1}+(n-1)\lambda\Big)\int_{\p E}u^{2\alpha-3\gamma}. \label{eqn:Final}
    \end{equation} 
    By Holder's inequality (recall $A(v_0)=\int_{\p E}u^\gamma$), we have
    \begin{equation}\label{eqn:Final2}
        A(v_0)\int_{\p E}u^{2\alpha-3\gamma}\geq\Big(\int_{\p E}u^{\alpha-\gamma}\Big)^2=Q^2.
    \end{equation}
    By joining \eqref{eqn:Final} and \eqref{eqn:Final2}, we finally obtain
    \begin{equation}\label{eqn:BarrierEquation}
        A(v_0)A''(v_0)\leq-\frac{A'(v_0)^2}{n-1}-(n-1)\lambda.
    \end{equation}
    Notice that $I(v_0)=A(v_0)$ and $I(v)\leq A(v)$ in some neighborhood of $v_0$, so $A$ is an upper barrier of $I$ which satisfies \eqref{eqn:BarrierEquation}. This is enough to prove that \eqref{eq:viscosity_aux} holds in the viscosity sense at $v_0$: indeed, for any test function $\varphi:\mathcal{I}_{v_0}\to\mathbb R$ in a neighborhood $\mathcal{I}_{v_0}\ni v_0$, we have $\varphi(v_0)=A(v_0)$ and $\varphi\leq A$ in a smaller neighborhood of $v_0$, and thus $\varphi'(v_0)=A'(v_0)$ and $\varphi''(v_0)\leq A''(v_0)$. Hence, \eqref{eqn:BarrierEquation} implies \eqref{eqn:ViscosityToShow}, as desired.
\end{proof}

\begin{lemma}\label{lem:Asymp}
    Suppose $M$ is complete, and $u\in C^\infty(M)$ is positive. Additionally, assume $x\in M$ satisfies $u(x)=\inf(u)=1$. Then, if $I$ is defined as in \eqref{eq:weighted_ip}, we have
    \[
    \limsup_{v\to0} v^{-\frac{n-1}n}I(v)\leq n\vol(\mathbb B^n)^{1/n},
    \]
    where $\mathbb B^n$ is the unit ball in $\mathbb R^n$.
\end{lemma}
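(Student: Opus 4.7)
The plan is to test the infimum in the definition of $I(v)$ with small geodesic balls centered at the point $x$ where $u$ attains its global minimum $1$. Since $u$ is continuous and $u(x)=1$, the function $u$ is uniformly close to $1$ on $B_r(x)$ for small $r$, so the weighted volume $\int_{B_r(x)}u^\alpha$ and weighted perimeter $\int_{\partial B_r(x)}u^\gamma$ are close to the Riemannian volume and perimeter of $B_r(x)$, which obey the standard Euclidean asymptotics in normal coordinates.

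More concretely, given $\epsilon>0$, I would fix $r_0>0$ small enough that $1\leq u\leq 1+\epsilon$ on $B_{r_0}(x)$ (this uses $u(x)=\inf u=1$ and continuity). In normal coordinates centered at $x$, the Riemannian metric differs from the Euclidean one by $O(r^2)$, so for $r\to 0$,
\[
\mathrm{vol}(B_r(x))=\omega_n r^n(1+o(1)),\qquad \mathcal H^{n-1}(\partial B_r(x))=n\omega_n r^{n-1}(1+o(1)),
\]
where $\omega_n=\mathrm{vol}(\mathbb B^n)$. Combining with the two-sided bounds on $u$, one obtains
\[
v(r):=\int_{B_r(x)}u^\alpha\in\bigl[\omega_n r^n(1+o(1)),\,(1+\epsilon)^\alpha\omega_n r^n(1+o(1))\bigr],
\]
and
\[
A(r):=\int_{\partial B_r(x)}u^\gamma\leq (1+\epsilon)^\gamma\, n\omega_n r^{n-1}(1+o(1)).
\]

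Since $r\mapsto v(r)$ is continuous with $v(0)=0$, by the intermediate value theorem every sufficiently small $v>0$ is attained as $v=v(r)$ for some small $r$, so $B_r(x)$ is an admissible competitor in \eqref{eq:weighted_ip} and $I(v)\leq A(r)$. The lower bound on $v(r)$ then gives
\[
v^{-(n-1)/n}\,I(v)\leq \bigl(\omega_n r^n\bigr)^{-(n-1)/n}(1+o(1))\cdot (1+\epsilon)^\gamma n\omega_n r^{n-1}(1+o(1))=n\omega_n^{1/n}(1+\epsilon)^\gamma(1+o(1)).
\]
Letting $v\to 0$ (equivalently $r\to 0$) and then $\epsilon\to 0$ yields the claimed bound. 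There is no real obstacle here: the argument is a direct application of the Euclidean infinitesimal behavior of small geodesic balls, and the role of the hypothesis $u(x)=\inf u=1$ is exactly to ensure that $u^\alpha$ and $u^\gamma$ contribute only an $(1+\epsilon)$-factor that vanishes in the limit.
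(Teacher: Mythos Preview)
Your proposal is correct and follows essentially the same approach as the paper: both test the infimum defining $I$ with small geodesic balls centered at the point where $u$ attains its minimum, and use the Euclidean asymptotics of the volume and surface area of geodesic balls together with $u(x)=1$ to obtain the stated limit. The only cosmetic difference is that the paper writes the sharper expansions $V(r)=\vol(\mathbb B^n)r^n+O(r^{n+1})$, $A(r)=n\vol(\mathbb B^n)r^{n-1}+O(r^n)$ (using smoothness of $u$) and then inverts $V$, whereas you carry an $\epsilon$-parameter from the bound $1\leq u\leq 1+\epsilon$ and send $\epsilon\to0$ at the end.
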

\begin{proof}
    For a small $r_0$, both functions $V(r)=\int_{B(x,r)}u^\alpha$ and $A(r)=\int_{\p B(x,r)}u^\gamma$ are smooth and increasing in $(0,r_0)$. We have the asymptotics
    \[
    V(r)=\vol(\mathbb B^n)r^n+O(r^{n+1}),\qquad A(r)=n\vol(\mathbb B^n) r^{n-1}+O(r^n).
    \]
    This implies that the function $A\circ V^{-1}$ has the asymptotic
    \[A\circ V^{-1}(v)=n\vol(\mathbb B^n)^{1/n}v^{\frac{n-1}n}+o(v^{\frac{n-1}n}).\]
    The result follows from the straightforward bound $I(v)\leq A\circ V^{-1}(v)$. 
\end{proof}

\begin{lemma}\label{lem:AnotherComparison}
    Let $V\in (0,+\infty]$, and let $I:[0,V)\to \mathbb R$ be a continuous function such that $I(0)=0$, and $I(v)>0$ for every $v\in (0,V)$. Assume that for some $\lambda>0$ we have
    \[
    I''I\leq -\frac{(I')^2}{n-1}-(n-1)\lambda, \qquad \text{in the viscosity sense on $(0,V)$},
    \]
    and 
    \begin{equation}\label{eq:small_vol_limit}
        \limsup_{v\to 0^+} v^{-\frac{n-1}{n}}I(v)\leq n\vol(\mathbb B^n)^{1/n}.
    \end{equation}
    Then $V\leq \lambda^{-n/2}\vol(\mathbb S^{n})$, where $\mathbb S^n$ denotes the unit sphere in $\mathbb R^{n+1}$.
\end{lemma}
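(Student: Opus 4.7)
The plan is to transform the scalar inequality on $I$ into a classical Jacobi-type inequality via the substitution $\rho := I^{1/(n-1)}$ together with a ``radial'' reparametrization, and then to compare with the model coming from the round sphere of Ricci curvature $(n-1)\lambda$.

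To carry this out, I would first introduce
\[
r(v) := \int_0^v \frac{ds}{I(s)}, \qquad v\in[0,V).
\]
The hypothesis \eqref{eq:small_vol_limit} gives $I(s)\leq C s^{(n-1)/n}$ near $0$ for a suitable $C$, hence the integrand is integrable at $0$ and $r(v)$ is a strictly increasing, locally Lipschitz function. Set $R_*:=\lim_{v\to V^-}r(v)\in(0,+\infty]$, let $v(r)$ be the inverse, and define
\[
\rho(r):=I(v(r))^{1/(n-1)},\qquad r\in[0,R_*),
\]
so that $\rho$ is continuous, $\rho(0)=0$, and $\rho>0$ on $(0,R_*)$. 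A direct chain-rule computation would then give
\[
\frac{d\rho}{dr}=\frac{\rho\, I'}{n-1},\qquad \frac{d^2\rho}{dr^2}=\frac{\rho}{n-1}\Bigl[I\,I''+\frac{(I')^2}{n-1}\Bigr]\leq -\lambda\rho,
\]
the last inequality being exactly the assumption $I''I\leq -(I')^2/(n-1)-(n-1)\lambda$. Thus $\rho$ would satisfy the Jacobi-type inequality $\rho_{rr}+\lambda\rho\leq 0$ (in the viscosity sense) on $(0,R_*)$, and a short computation would also turn \eqref{eq:small_vol_limit} into
\[
\limsup_{r\to 0^+}\frac{\rho(r)}{r}\leq (n\vol(\mathbb B^n))^{1/(n-1)}=\omega_{n-1}^{1/(n-1)}, \qquad \omega_{n-1}:=\vol(\mathbb S^{n-1}).
\]

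Next I would compare $\rho$ with the explicit model $\tilde\phi(r):=\omega_{n-1}^{1/(n-1)}\lambda^{-1/2}\sin(r\sqrt\lambda)$, which solves $\tilde\phi''+\lambda\tilde\phi=0$, matches the above asymptotic at $0$, and has next positive zero at $r=\pi/\sqrt\lambda$. A Sturm comparison via the Wronskian $W:=\tilde\phi\rho_r-\tilde\phi'\rho$ yields $W'=\tilde\phi\rho_{rr}-\tilde\phi''\rho\leq 0$ and $W(0^+)=0$, hence $W\leq 0$ on $[0,R_*)\cap[0,\pi/\sqrt\lambda)$. Equivalently $(\log(\rho/\tilde\phi))'\leq 0$ on this interval, and the asymptotic above gives $\limsup_{r\to 0^+}\rho/\tilde\phi\leq 1$, so $\rho\leq\tilde\phi$ there. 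Since $\rho>0$ on $(0,R_*)$ but $\tilde\phi$ vanishes at $\pi/\sqrt\lambda$, this would force $R_*\leq\pi/\sqrt\lambda$.

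To conclude, $dv=I\,dr=\rho^{n-1}\,dr$, and so
\[
V=\int_0^{R_*}\rho(r)^{n-1}\,dr\leq\int_0^{\pi/\sqrt\lambda}\tilde\phi(r)^{n-1}\,dr=\omega_{n-1}\lambda^{-n/2}\int_0^\pi\sin^{n-1}(s)\,ds=\lambda^{-n/2}\vol(\mathbb S^n),
\]
which is the sharp bound claimed. The hard part will be carrying the viscosity inequality on $I$ through the non-smooth change of variable $v\mapsto r$: I expect to handle this by taking, at any point $v_0\in(0,V)$, a smooth upper barrier $A$ of $I$ (as supplied by the viscosity hypothesis, compare \cref{lem:Viscosity}) and working with the barrier-adapted local reparametrization $\tilde r:=\int dv/A$, so that $B(\tilde r):=A^{1/(n-1)}$ is a smooth upper barrier of $\rho$ at $r(v_0)$ satisfying $B_{\tilde r\tilde r}(r(v_0))+\lambda B(r(v_0))\leq 0$; this should be enough to run the Wronskian argument above in the viscosity sense.
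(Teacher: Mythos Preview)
Your overall route is sound and genuinely different from the paper's. The paper sets $\psi:=I^{n/(n-1)}$, observes that the hypothesis becomes $\psi''\leq -n\lambda\psi^{(2-n)/n}$ in the viscosity sense, and then compares $\psi$ directly (in the $v$-variable) with the one-parameter family of model profiles $\psi_\zeta$ coming from rescaled spheres, invoking a comparison result from Bayle's thesis. Your reparametrization $r=\int_0^v ds/I(s)$ with $\rho=I^{1/(n-1)}$ linearizes the problem to $\rho_{rr}+\lambda\rho\leq 0$ and reduces everything to classical Sturm comparison; this is pleasant because it exactly parallels the Jacobi-field proof of Bishop--Gromov, and it avoids citing an external ODE comparison.

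There is, however, a genuine gap at the very first step. You write that $I(s)\leq Cs^{(n-1)/n}$ near $0$ ``hence the integrand is integrable at $0$''. This is the wrong direction: an upper bound on $I$ yields a \emph{lower} bound $1/I(s)\geq C^{-1}s^{-(n-1)/n}$, which does not control $\int_0^v ds/I(s)$ from above. What you actually need is a lower bound $I(s)\geq c\,s^{(n-1)/n}$ for some $c>0$. This does hold, but for a different reason: the viscosity inequality forces $\psi=I^{n/(n-1)}$ to be concave (indeed $\psi''\leq -n\lambda\psi^{(2-n)/n}<0$ in the viscosity sense), and for a concave $\psi$ with $\psi(0)=0$, $\psi>0$ on $(0,V)$, the ratio $\psi(v)/v$ is nonincreasing with finite positive limit $a:=\psi'_+(0)$ (finiteness uses \eqref{eq:small_vol_limit}, positivity uses $\psi>0$). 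Hence $\psi(v)\geq (a-\epsilon)v$ for small $v$, giving $I(s)\geq c\,s^{(n-1)/n}$ and the convergence of your integral. Once you insert this argument, your asymptotic $\limsup_{r\to 0}\rho(r)/r\leq\omega_{n-1}^{1/(n-1)}$ goes through as well.

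A smaller caution: your barrier sketch for transferring the viscosity inequality through the change of variables is not quite right as stated, since $\tilde r\neq r$ away from $v_0$ and it is not automatic that $B(\tilde r)$ is an upper barrier for $\rho(r)$. One clean way around this is to exploit the concavity you have just established: $\psi$ concave implies $I$ (and hence $\rho$) is twice differentiable a.e.\ in the sense of Aleksandrov, and at such points the chain-rule computation $\rho_{rr}+\lambda\rho\leq 0$ is literal; together with the monotonicity of one-sided derivatives this is enough to run the Wronskian argument.
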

\begin{proof}
    The proof is classical. Let us write it here for completeness. Set $\psi:=I^{\frac{n}{n-1}}$. A direct computation gives that
    \begin{equation}\label{eqn:PsiZetaViscosity}
    \psi'' \leq -\lambda n\psi^{(2-n)/n} \qquad \text{in the viscosity sense on $(0,V)$.}
    \end{equation}
    For every $\zeta>0$, let us consider the model function $I_\zeta:(0,V_\zeta)\to\mathbb R$ implicitly defined by
    \begin{equation}\label{eqn:DefnIzetaNew}
        I_\zeta\Big(\zeta\int_0^r\mu(s)^{n-1}\,\d s\Big)=\zeta\mu(r)^{n-1},
    \end{equation}
    where $\mu:[0,\frac\pi{\sqrt\lambda}]\to\mathbb R$ is defined by $\mu(r):=\frac{1}{\sqrt\lambda}\sin(\sqrt\lambda r)$. Notice $V_\zeta=\frac{\zeta}{\lambda^{n/2}}\frac{\vol(\mathbb S^n)}{\vol(\mathbb S^{n-1})}$, and $I_\zeta(0)=I_\zeta(V_\zeta)=0$. A direct computation gives that the function $\psi_\zeta:=I_\zeta^{\frac{n}{n-1}}$ satisfies 
    \begin{equation}\label{eqn:PsiZeta}
    \psi_\zeta'' = -\lambda n\psi_\zeta^{(2-n)/n} \qquad \text{on $(0,V_\zeta)$.}
    \end{equation}
    Denoting by $\psi'_+(0)$ the upper right-hand Dini derivative, by \eqref{eq:small_vol_limit} we have
    \[
    \psi'_+(0)\leq n^{\frac{n}{n-1}}\vol(\mathbb B^n)^{\frac1{n-1}}.
    \]
    Moreover, direct computation gives 
    \[
    (\psi_\zeta)'_+(0)=n\zeta^{\frac1{n-1}}.
    \]
    Notice that when $\zeta=n\vol(\mathbb B^n)=\vol(\mathbb S^{n-1})$, we exactly have $V_{\zeta}=\lambda^{-n/2}\vol(\mathbb S^n)$.
    
    Suppose by contradiction that $V>\lambda^{-n/2}\vol(\mathbb S^n)$. Then there is $\zeta'>n\vol(\mathbb B^n)$ such that $V_{\zeta'}<V$. Since $\zeta'>n\vol(\mathbb B^n)$ we have 
    \begin{equation}\label{eqn:ConditionOnDerivative}
    +\infty>(\psi_{\zeta'})'_+(0) > \psi'_+(0).
    \end{equation}
    Taking into account \eqref{eqn:PsiZetaViscosity}, \eqref{eqn:PsiZeta}, and \eqref{eqn:ConditionOnDerivative} we can apply \cite[Théorème C.2.2]{BaylePhD}, after having used \cite[Proposition A.3, (1)$\Leftrightarrow$(3)]{PozzettaSurvey}. Hence we get 
    \[
    \psi(v)\leq \psi_{\zeta'}(v) \qquad \text{for all $v \leq V_{\zeta'}$}.
    \]
    The latter inequality, evaluated at $v=V_{\zeta'}$, gives $\psi(V_{\zeta'})\leq 0$, which is a contradiction with the fact that $I>0$ on $(0,V_{\zeta'})\subset (0,V)$.
\end{proof}

\begin{proof}[Proof of \cref{1mainthm} of \cref{cor:CLMSArbitraryDimension}]
    Let us assume we are in the hypotheses of \cref{cor:CLMSArbitraryDimension}, and take $u$ as in \eqref{eqn:EnhancedSpectralCondition}. We may divide $u$ by $\min(u)$, and thus assume $\min(u)=1$. Let $\pi:\tilde M\to M$ be the universal cover of $M$. Set $\tilde g=\pi^*g$ and $\tilde u:=u\circ\pi$, thus \eqref{eqn:EnhancedSpectralCondition} pulls back to give
    \begin{equation}\label{eq:spectral_cover}
        \gamma \tilde{\Delta} \tilde u \leq \tilde u \widetilde{\mathrm{Ric}}-(n-1)\lambda\tilde u.
    \end{equation}
    Let $\tilde I(v)$ be the weighted isoperimetric profile, defined as in \eqref{eq:weighted_ip}, but with $(\tilde M,\tilde g)$ and $\tilde u$ in place of $(M,g)$ and $u$. Set $\tilde V_0:=\int_{\tilde M}\tilde u^\alpha$. Since in \cref{2mainthm} of \cref{cor:CLMSArbitraryDimension} we have shown that $\tilde M$ is compact, by the classical Geometric Measure Theory, for every $v\in (0,\tilde V_0)$ there is a bounded set of finite perimeter $\tilde E\subset \tilde M$ such that $\int_{\tilde E}\tilde u^\alpha=v$ and $\int_{\p^* \tilde E}\tilde u^\gamma=\tilde I(v)$. It follows by Lemma \ref{lem:Viscosity} that we have the viscosity inequality
    \[
    \tilde I''\tilde I\leq -\frac{(\tilde I')^2}{n-1}-(n-1)\lambda \qquad \text{in }(0,\tilde V_0),
    \]
    and by Lemma \ref{lem:Asymp} that
    \[
    \limsup_{v\to 0^+} v^{-\frac{n-1}{n}}\tilde I(v)\leq n\vol(\mathbb B^n)^{1/n}.
    \]
    Also, we get that $\tilde I$ is continuous (verbatim as in \cite[Proposition 5.3]{CLMS}). Thus, a direct application of \cref{lem:AnotherComparison}, and the fact that $\min(\tilde u)=1$, gives the desired volume bound
    \begin{equation}\label{eq:vol_ineq}
        \vol(\tilde M)\leq\int_{\tilde M}\tilde u^\alpha=\tilde V_0\leq \lambda^{-n/2}\vol(\mathbb S^n).
    \end{equation}
    Finally, suppose equality holds in \eqref{eqn:VoluemControl}. Then it follows from \eqref{eq:vol_ineq} that $\tilde u$ is constant, since we have $\vol(\tilde M)=\lambda^{-n/2}\mathrm{vol}(\mathbb S^n)$, and $\min(\widetilde u)=1$. This then implies $\widetilde{\mathrm{Ric}}\geq (n-1)\lambda$, due to \eqref{eq:spectral_cover}. Hence by the rigidity of the Bishop--Gromov volume comparison theorem, $\tilde M$ is the round sphere of radius $\lambda^{-1/2}$.
\end{proof}

\begin{proof}[Proof of \cref{Corollary}]
    The compactness of $M$ and the uniform diameter upper bound are direct consequences of \cite[Theorem 1.10]{KaiIntermediate}, which utilizes an (equally) warped $\mu$-bubble in its proof. The parameter $\beta$ there corresponds to $1/\gamma$ here. Then, since $\frac4{n-1}\leq\frac{n-1}{n-2}$ for every $n\geq 3$, we get the result in \cref{Corollary} as a consequence of the main result in \cref{cor:CLMSArbitraryDimension}. The result in \cite[Theorem 1.10]{KaiIntermediate} is stated only for $3\leq n\leq 7$ for the reason of minimal surface singularities. But the result holds for $n\geq 8$ as well, by arguing similarly as in the second part of Appendix \ref{sec:Case8}.
\end{proof}

\begin{remark}[Direct isoperimetry comparison on $\tilde M$]\label{rmk:direct_isop}    The above proof of \cref{1mainthm} of \cref{cor:CLMSArbitraryDimension} is based on the fact that $\tilde M$ is compact, which was proved in \cref{2mainthm}, so that we can always find a minimizer for \eqref{eq:weighted_ip}. We remark that the existence of a minimizer for \eqref{eq:weighted_ip} on $\tilde M$ can also be proved independently of \cref{2mainthm}, as a consequence of \cite[Theorem 5.3]{NovagaPaoliniStepanovTortorelli}. The latter is proved via a concentration-compactness argument, making use of the deck transformation group. Also, the function $I(v)$ is $(1-1/n)$-H\"older continuous by arguing similarly as in \cite[Lemma 2.23]{AntonelliNardulliPozzetta}. Combining these information and Lemma \ref{lem:Viscosity}\,$\sim$\,\ref{lem:AnotherComparison}, we obtain another proof of the compactness of $\tilde M$, hence the finiteness of $\pi_1(M)$.
\end{remark}

\begin{remark}[On the supercritical case]\label{rem:Supercritical}     For the values $\gamma\in(\frac{n-1}{n-2},\infty)$, none of the results in \cref{cor:CLMSArbitraryDimension} continue to hold. A counterexample is the following. On $M=\mathbb S^1\times \mathbb S^{n-1}$ consider the spherically symmetric metric $g=\mathrm{d}r^2+\varepsilon^2f(r)^2\mathrm{d}t^2$, where $f(r)$ is a fixed positive periodic nonconstant function, and $\varepsilon>0$ will be chosen small with respect to $f$. We have
     \[
    \mathrm{Ric}(\partial_r,\partial_r)= -(n-1)\frac{f''}{f}, \qquad  \mathrm{Ric}(e,e)=-\frac{f''}{f}+(n-2)\frac{1-\varepsilon^2 (f')^2}{\varepsilon^2f^2},
    \]
    where $e$ is an arbitrary unit vector perpendicular to $\partial_r$. Hence, for sufficiently small $\varepsilon$ (depending on $f$), we have $\mathrm{Ric}=\mathrm{Ric}(\partial_r,\partial_r)$. Recall that, if $u:=u(r)$, one can directly compute
    \[
    \Delta u = u'' + \frac{n-1}{\varepsilon f}u'(\varepsilon f)'=u''+(n-1)\frac{u'f'}{f}.
    \]
    Consider the choice $u:=f(r)^{2-n}$, and set $\gamma_0=\frac{n-1}{n-2}$. A direct computation shows
    \[
    -\gamma_0\Delta u+\Ric u=-\gamma_0\Delta u+\Ric(\p_r,\p_r)u=0.
    \]
    Note that this example appears as a rigidity case of \cite[Proposition 3.5]{BourCarron}.
    
    We are now committed to show that, for all $\gamma>\gamma_0$, there exists a $\lambda>0$ such that 
    \begin{equation}\label{eqn:PositiveDirchletWanted}
    -\gamma\Delta+\mathrm{Ric}-\lambda\geq 0.
    \end{equation}
    Therefore $(M,g)$ satisfies the main condition \eqref{eqn:EnhancedSpectralCondition} (with $\frac\lambda{n-1}$ in place of $\lambda$), but it has $\pi_1(M)=\mathbb{Z}$. Let $v\in C^{\infty}(M)$, and set $\eta:=v/u$, where $u$ is as above. Then $v=u\eta$, and integrating by parts
    \begin{equation}\label{eqn:ComputationPositivityFromExistenceu}
    \begin{aligned}
        \int_M\gamma_0|\D v|^2+\Ric v^2 &= \int_M\gamma_0 u^2|\D\eta|^2+2\gamma_0 u\eta\metric{\D u}{\D\eta}+\gamma_0\eta^2|\D u|^2+\Ric u^2\eta^2 \\
        &=\int_M\gamma_0u^2|\nabla\eta|^2+u\eta^2(\Ric u-\gamma_0\Delta u) = \int_M\gamma_0u^2|\nabla\eta|^2.
    \end{aligned}
    \end{equation}
    Hence 
    \begin{equation}\label{eq:super_coercive}
    \begin{aligned}
        \int_M\gamma|\D v|^2+\Ric v^2 &= (\gamma-\gamma_0)\int_M|\D v|^2+\gamma_0\int_Mu^2|\D\eta|^2\\
        &\geq (\gamma-\gamma_0)\int_M|\nabla v|^2+\gamma_0\min(u)^2\int_M\Big|\nabla\left(\frac{v}{u}\right)\Big|^2.
    \end{aligned}
    \end{equation}
    Call $\alpha:=\gamma-\gamma_0$, $\beta:=\gamma_0\min(u)^2$. Observe that 
    \[
    c(M):=\inf\left\{\alpha\int_M|\nabla v|^2+\beta\int_M\Big|\nabla\left(\frac{v}{u}\right)\Big|^2:v\in C^\infty(M),\,\int_M v^2=1\right\}>0,
    \]
    since $u$ is nonconstant and a minimizer exists. Joining the latter with \eqref{eq:super_coercive} we finally infer that 
    \[
    -\gamma\Delta +\mathrm{Ric} - c\geq 0,
    \]
    which is the sought \eqref{eqn:PositiveDirchletWanted} with $\lambda:=c(M)>0$. \qed
\end{remark}

\begin{remark}[Smoothing the eigenfunctions]\label{rmk:smoothing_proof} 
    This remark is to prove what claimed in \cref{rmk:smoothing} in the introduction. Given $\lambda>0$ and $0\leq \gamma\leq \frac{n-1}{n-2}$, assume the positivity of Dirichlet energy
    \begin{equation}\label{eqn:PositiveSchrodinger2}
        \lambda_1(-\gamma\Delta+\Ric)\geq(n-1)\lambda,
    \end{equation}
    on the $n$-dimensional manifold $M$.
    We first aim at proving the following: \begin{equation}\label{eqn:SpectralMainThm}
        \begin{aligned}
            & \text{If $M$ is compact and \eqref{eqn:PositiveSchrodinger2} holds, then} \\
            &\hspace{108pt} \text{$\vol(\tilde M)\leq \lambda^{-n/2}\vol(\mathbb S^n)$, and $\pi_1(M)$ is finite.}
        \end{aligned}
    \end{equation}
    By regularizing the function $\mathrm{Ric}$, for each $0<\varepsilon\ll\lambda$ there exists $V_\varepsilon\in C^\infty(M)$ such that $\|V_\varepsilon-\mathrm{Ric}\|_{C^0}\leq \varepsilon$. Note that $\lambda_1(-\gamma\Delta+V_\epsilon)\geq(n-1)\lambda-\epsilon$. Let $u_\epsilon\in C^\infty(M)$ be the first eigenfunction of $-\gamma\Delta+V_\epsilon$. Notice that $u_\varepsilon$ is positive, and we have
    \begin{equation}\label{eq:approx_eigen}
        -\gamma\Delta u_\epsilon+\Ric u_\epsilon\geq\Big[(n-1)\lambda-2\epsilon\Big]u_\epsilon.
    \end{equation}
    Thus, applying \cref{cor:CLMSArbitraryDimension} with \eqref{eq:approx_eigen} and taking $\varepsilon\to 0$, we have proved the desired \eqref{eqn:SpectralMainThm}.
    \smallskip
    
    Let us now deal with the rigidity part. For simplicity, after rescaling, assume $\lambda=1$. Hence, assume we have $\lambda_1(-\gamma\Delta u+\Ric)\geq n-1$ and $\vol(M)=\vol(\mathbb S^n)$. Let us show that $M$ is isometric to the standard sphere of radius $1$.

    Let $u\in W^{1,2}(M)$ be the first eigenfunction of $-\gamma\Delta +\mathrm{Ric}$, so that $-\gamma\Delta u+\Ric u=\lambda u$ for some $\lambda\geq(n-1)$. Normalize $u$ so that $\int_M u^2=1$. Set $\lambda_\epsilon:=\lambda_1(-\gamma\Delta+V_\epsilon)$, and let $u_\epsilon\in C^\infty(M)$ be the corresponding first eigenfunction, normalized so that $\int_M u_\epsilon^2=1$. Note that $|\lambda-\lambda_\epsilon|\leq\epsilon$, and $u,u_\epsilon$ are all positive.

    Let us show that $u_\epsilon\to u$ in $C^{1,\beta}$ for every $0<\beta<1$. Note that $\int_M |\D u_\epsilon|^2$ are uniformly bounded, so there is a subsequence converging to some $\tilde u\in W^{1,2}$ strongly in $L^2$ and weakly in $W^{1,2}$. Using lower semi-continuity, and the fact that $u_\epsilon$ are normalized eigenfunctions, we have:
    \[
    \int_M\gamma|\D\tilde  u|^2+\Ric\tilde  u^2\leq\liminf_{\epsilon\to0}\int_M\gamma|\D u_\epsilon|^2+\lim_{\epsilon\to0}\int_M V_\epsilon u_\epsilon^2\leq\lim_{\epsilon\to0}\lambda_\epsilon=\lambda.
    \]
    So it must happen that $\tilde u=u$. Thus $u_\epsilon\to u$ in $L^2$. Now, applying standard elliptic regularity, this can be bootstrapped to imply $u_\epsilon\to u$ in $W^{2,p}$ for every $p>1$, hence in $C^{1,\beta}$ for every $0<\beta<1$.
    
    By \cref{cor:CLMSArbitraryDimension} (specifically, \eqref{eq:vol_ineq} in its proof) and the rigidity assumption, we have
    \[\begin{aligned}
        \vol(M) &\leq \min(u_\epsilon)^{-\alpha}\int_M u_\epsilon^\alpha 
        \leq \vol(\mathbb S^{n})\big(1+o_\epsilon(1)\big)=\vol(M)\big(1+o_\epsilon(1)\big).
    \end{aligned}\]
    Passing this to the limit, we get
    \[\vol(M)\leq\min(u)^{-\alpha}\int_M u^\alpha\leq\vol(M).\]
    Hence $u$ is constant, and we are reduced to pointwise Ricci bound, from which the rigidity follows from the classical Bishop-Gromov volume comparison.

    Finally, the Bonnet-Myers result \eqref{eqn:BonnetMyers} follows from \eqref{eq:approx_eigen} and the $C^{1,\beta}$-convergence obtained above. 
    \qed
\end{remark}

\section{Isoperimetry under \texorpdfstring{$\mathrm{Ric}\geq 0$}{Ric≥0} and spectral \texorpdfstring{$\mathrm{biRic}\geq n-2$}{biRic≥n-2}}\label{sec:isop}

We aim at showing that when $3\leq n\leq 5$, a spectral biRicci condition descends to a spectral Ricci condition on properly chosen warped $\mu$-bubbles. We will keep track of the sharp constants in the process, and we will use the forthcoming computations to show \cref{existencegoodmububble}. We stress that the following computations are inspired by \cite[Theorem 4.3]{CLMS}. Similar computations have appeared in \cite[Theorem 4.1]{Mazet} in the proof of the stable Bernstein problem in $\mathbb R^6$, while we were revising the final draft of this paper.

Let $(M^n,g)$ be a complete smooth Riemannian manifold. Let $3\leq n\leq 5$, and $0\leq \gamma<6-n$. It will also be clear from the computations that when $n=4$ we can choose $0\leq \gamma\leq 2$. Let $u$ be a positive smooth function on $M$ such that, for some $\lambda\in\mathbb R$, 
\begin{equation}\label{eqn:biRicbound}
-\gamma\Delta u + u\mathrm{biRic}\geq \lambda u.
\end{equation}
For an arbitrary fixed domain $\Omega_0$ with $\Omega_-\subset\subset\Omega_0\subset\subset\Omega_+\subset M$, consider the functional
\begin{equation}\label{eqn:MuBubbleFunctional2}
    E(\Omega):=\int_{\p^*\Omega}u^\gamma-\int(\chi_\Omega-\chi_{\Omega_0})hu^\gamma,
\end{equation}
defined on sets of finite perimeter, where $\p^*\Omega$ is the reduced boundary of $\Omega$. Take $\Omega$ to be a minimizer, and notice $\partial\Omega$ is smooth because $3\leq n\leq 5$.

Let $\nu$ denote the outer unit normal at $\p\Omega$, and let $\varphi\in C^\infty(M)$. For an arbitrary smooth variation $\{\Omega_t\}_{t\in(-\varepsilon,\varepsilon)}$ with $\Omega_0=\Omega$ and variational field $\varphi\nu$ at $t=0$, we compute the first variation
\begin{equation}
    0=\frac{\mathrm{d}}{\mathrm{d}t}E(\Omega_t)\Big|_{t=0}=\int_{\p\Omega}\big(H+\gamma u^{-1}u_\nu-h\big)u^\gamma\varphi.
\end{equation}
Since $\varphi$ is arbitrary we have $H=h-\gamma u^{-1}u_\nu$. Then, computing the second variation,
\[\begin{aligned}
    0 &\leq \frac {\mathrm{d}^2}{\mathrm{d}t^2}E(\Omega_t)\Big|_{t=0} \\
    &= \int_{\p\Omega}\Big[-\Delta_{\p\Omega}\varphi-|\text{II}|^2\varphi-\Ric(\nu,\nu)\varphi-\gamma u^{-2}u_\nu^2\varphi \\
    &\hspace{72pt} +\gamma u^{-1}\varphi\big(\Delta u-\Delta_{\p\Omega}u-Hu_\nu\big)-\gamma u^{-1}\metric{\D_{\p\Omega}u}{\D_{\p\Omega}\varphi} -h_\nu \varphi\Big]u^\gamma\varphi.
\end{aligned}\]
Set $\varphi:=\psi u^{-\gamma/2}$, so $u^\gamma\varphi^2=\psi^2$. Then we get 
\begin{equation}\label{eqn:ControlToDo2}
    \begin{aligned}
        0 &\leq \int_{\p\Omega}\Big[
            -\Delta_{\p\Omega}(\psi u^{-\gamma/2})\psi u^{\gamma/2}
            -\gamma u^{-1}\psi^2\Delta_{\partial\Omega}u
            -\gamma u^{\gamma/2-1}\psi  \metric{\D_{\p\Omega}u}{\D_{\p\Omega}(\psi u^{-\gamma/2})}\Big] \\
        &\qquad\qquad +\psi^2\Big[-|\text{II}|^2-\Ric(\nu,\nu)-\gamma u^{-2}u_\nu^2 
        +\gamma u^{-1}\big(\Delta u-Hu_\nu\big)
        -h_\nu\Big] \\
        &=: P+Q.
    \end{aligned}
\end{equation}
Recall that, see \cite[Page 13]{CLMS},
\begin{equation}\label{eqn:RicBiRic}
    |\text{II}|^2+\mathrm{Ric}(\nu,\nu)\geq \mathrm{biRic}-\mathrm{Ric}_{\p\Omega}+\frac{6-n}{4}H^2.
\end{equation}
Let us call $Y=u^{-1}u_\nu$, thus $H=h-\gamma Y$. By \eqref{eqn:RicBiRic} and \eqref{eqn:biRicbound} we have:
\[\begin{aligned}
    -|\text{II}|^2-\mathrm{Ric}(\nu,\nu)+\gamma u^{-1}\Delta u
    &\leq -\mathrm{biRic}+\mathrm{Ric}_{\p\Omega}-\frac{6-n}{4}H^2+\gamma u^{-1}\Delta u \\
    &\leq -\lambda +\mathrm{Ric}_{\p\Omega}-\frac{6-n}{4}H^2.
\end{aligned}\]
Thus
\begin{equation}\label{eqn:ControlOnQ}
    \begin{aligned}    
        Q &\leq\int_{\partial\Omega} \psi^2\Big[-\lambda+\mathrm{Ric}_{\p\Omega}-\frac{6-n}{4}(h-\gamma Y)^2-\gamma Y^2-\gamma(h-\gamma Y)Y+|\nabla h|\Big] = \\
        &\leq\int_{\partial\Omega} \psi^2\Big[-\lambda+\mathrm{Ric}_{\p\Omega}+|\nabla h|-\frac{6-n}{4}h^2+\frac{4-n}{2}\gamma hY+\left(\frac{n-2}{4}\gamma^2-\gamma\right)Y^2\Big].
    \end{aligned}
\end{equation}
Notice that $6-n\leq\frac4{n-2}$ for all $n>2$, with equality if and only if $n=4$. Hence, since $\gamma<6-n$, we also have $\gamma<\frac4{n-2}$. It can computed directly that
\begin{equation}
    \begin{aligned}\label{eqn:AlgebraicInequality}
    &\frac{6-n}{4}h^2-\frac{4-n}{2}\gamma h Y + \left(\gamma-\frac{n-2}{4}\gamma^2\right)Y^2\\
    &\quad =\left(\frac{6-n}{4}-\frac{(4-n)^2\gamma}{4\left(4-(n-2)\gamma\right)}\right)h^2+\left(\gamma-\frac{n-2}{4}\gamma^2\right)\left(\frac{(4-n)}{4-(n-2)\gamma}h-Y\right)^2.
    \end{aligned}
\end{equation}
Denote 
\begin{equation}\label{eqn:cngamma}
c(n,\gamma):=\frac{6-n}{4}-\frac{(4-n)^2\gamma}{4(4-(n-2)\gamma)},
\end{equation}
and notice that $\gamma<6-n \Leftrightarrow c(n,\gamma)>0$, and $c(n,\gamma)=0$ if and only if $\gamma=6-n$. Moreover, since $0\leq \gamma<\frac4{n-2}$ as noticed above, we have $\gamma-\frac{n-2}{4}\gamma^2 \geq 0$. Joining \eqref{eqn:ControlOnQ}, and \eqref{eqn:AlgebraicInequality} we thus conclude
\begin{equation}\label{eqn:IneqOnQ2}
Q\leq \int_{\partial\Omega}\psi^2\left[-\lambda+\mathrm{Ric}_{\p\Omega}+|\nabla h|-c(n,\gamma)h^2\right].
\end{equation}
Notice that in case $n=4$, and $\gamma=2$, \eqref{eqn:IneqOnQ2} still holds with $c(4,2):=1/2$ (i.e., when $n=4$ we can work in the full interval $0\leq \gamma\leq 2$ as claimed above).

Then we deal with the gradient terms $P$.
Notice that we have the following integration by parts formula for every smooth function $\eta\in C^\infty(\partial\Omega)$:
\begin{equation}\label{eqn:IntegrationByPartTricky2}
    \begin{aligned}
        &\quad \int_{\partial\Omega}\metric{\D_{\partial\Omega}(u^{-\gamma}\eta)}{\D_{\partial\Omega}\eta}-\gamma u^{-\gamma-1}\eta^2\Delta_{\partial\Omega} u-\gamma u^{-1}\eta\metric{\D_{\partial\Omega} u}{\D_{\partial\Omega}(u^{-\gamma}\eta)} \\
        &= \int_{\partial\Omega} u^{-\gamma}|\D_{\partial\Omega}\eta|^2-\gamma u^{-\gamma-2}\eta^2|\D_{\partial\Omega} u|^2.
    \end{aligned}
    \end{equation}
Thus applying \eqref{eqn:IntegrationByPartTricky2} with $\eta=\psi u^{\gamma/2}$ we get, using Young's inequality, 
\[\begin{aligned}
    P&=\int_{\partial\Omega} u^{-\gamma}|\nabla_{\partial\Omega}(\psi u^{\gamma/2})|^2-\gamma u^{-2}\psi^2|\nabla_{\partial\Omega} u|^2 \\
    &=\int_{\partial\Omega} |\nabla_{\partial\Omega} \psi|^2+\gamma u^{-1}\psi\metric{\nabla_{\partial\Omega} u}{\nabla_{\partial\Omega} \psi}+\left(\frac{\gamma^2}{4}-\gamma\right)u^{-2}\psi^2|\nabla_{\partial\Omega} u|^2  \\
    &\leq \int \left(1+\frac{\gamma^2}{4c}\right)|\nabla_{\partial\Omega}\psi|^2+\left(\frac{\gamma^2}{4}-\gamma+c\right)u^{-2}\psi^2|\nabla_{\partial\Omega} u|^2.
\end{aligned}\]
Choose $c=\gamma-\gamma^2/4$, which is positive since $0\leq \gamma\leq 4$ in the range we are working in. Hence we get
\begin{equation}\label{eqn:IneqOnS2}
S \leq \int \frac{4}{4-\gamma}|\nabla_{\partial\Omega}\psi|^2.
\end{equation}
Putting together \eqref{eqn:IneqOnQ2}, \eqref{eqn:IneqOnS2}, and \eqref{eqn:ControlToDo2} we get
\begin{equation}\label{eqn:44-gamma}
    \int_{\p\Omega}\frac4{4-\gamma}\big|\nabla_{\partial\Omega}\psi\big|^2+\Ric_{\p\Omega}\psi^2\geq\int_{\p\Omega}\Big[\lambda-|\nabla h|+c(n,\gamma)h^2\Big]\psi^2,
\end{equation}
for every $\psi\in C^\infty(\partial\Omega)$.
\smallskip

The previous computations can be used to infer the following key \cref{existencegoodmububble}. In dimension $n=4$, or when $\gamma=0$, similar results have appeared in \cite[Theorem 4.3]{CLMS}, and \cite[Lemma 3.1]{KaiIntermediate}, respectively.

\begin{lemma}\label{existencegoodmububble}
    Let $(M^n,g)$ be a smooth complete noncompact Riemannian manifold with $3\leq n\leq5$, let $0\leq \gamma<6-n$ (or $0\leq \gamma\leq 2$ when $n=4$), and $S$ be a compact, and connected set such that 
    \begin{equation}\label{eqn:AssumptionLemmaMuBubbleTwisted}
    -\gamma\Delta+\mathrm{biRic}\geq n-2
    \end{equation}
    on $M\setminus S$. Then there is a constant $C:=C(n,\gamma)$ with the following property: for any $0<\varepsilon<n-2$ there exists a connected, smooth, bounded region $\Omega\supset\supset S$, such that $d(\p\Omega,S)\leq C\varepsilon^{-1/2}$, each connected component $\Sigma$ of $\p\Omega$ satisfies
    \begin{equation}\label{eq:mububble_spec}
        -\frac{4}{4-\gamma}\Delta_\Sigma+\mathrm{Ric}_\Sigma-(n-2-\varepsilon)\geq 0,
    \end{equation}
    and $M\setminus\Omega$ does not have bounded connected components.
\end{lemma}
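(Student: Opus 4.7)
The plan is to apply the unequally warped $\mu$-bubble construction developed just above the lemma, choosing the auxiliary function $h$ so that the stability estimate \eqref{eqn:44-gamma} directly yields \eqref{eq:mububble_spec}. Since hypothesis \eqref{eqn:AssumptionLemmaMuBubbleTwisted} provides $\lambda=n-2$ in \eqref{eqn:biRicbound}, I need $h$ to satisfy the pointwise inequality $|\nabla h|-c(n,\gamma)h^2\leq\varepsilon$ on an annular region around $S$, together with the blow-up behavior \eqref{eqn:condition_for_h}.

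To construct $h$, I would fix a small smooth bounded open neighborhood $\Omega_-\supset S$, and let $\rho$ be a smoothing of $d(\cdot,\partial\Omega_-)$ with $|\nabla\rho|\leq 1+\delta$ for a small $\delta>0$. Setting $A=\sqrt{\varepsilon/c(n,\gamma)}$ and $B=c(n,\gamma)A/(1+\delta)=\sqrt{c(n,\gamma)\varepsilon}/(1+\delta)$, the ansatz $h:=A\cot(B\rho)$ would satisfy
\[
|\nabla h|\leq(1+\delta)AB\csc^2(B\rho)=(1+\delta)AB+(1+\delta)(B/A)h^2\leq c(n,\gamma)A^2+c(n,\gamma)h^2=c(n,\gamma)h^2+\varepsilon,
\]
as required. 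Taking $\Omega_+:=\{B\rho<\pi\}$ produces a bounded open set with $\overline{\Omega_+}\setminus\Omega_-$ compact (by completeness of $M$ and Hopf--Rinow, using that $M$ is noncompact so $\rho$ is unbounded), and the limits \eqref{eqn:condition_for_h} are clear. By Proposition~2.1 of \cite{JintianZhu} applied to \eqref{eqn:MuBubbleFunctional2}, there is a minimizer $\widetilde\Omega$ with $\Omega_-\subset\subset\widetilde\Omega\subset\subset\Omega_+$; since $\dim\partial\widetilde\Omega\leq 4$ the boundary is smooth, and $d(\partial\widetilde\Omega,S)\leq\pi/B\leq C(n,\gamma)\varepsilon^{-1/2}$.

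Applying \eqref{eqn:44-gamma} to a test function $\psi\in C^\infty(\partial\widetilde\Omega)$ supported on a single connected component $\Sigma\subset\partial\widetilde\Omega$ then yields
\[
\int_\Sigma\tfrac{4}{4-\gamma}|\nabla_\Sigma\psi|^2+\Ric_\Sigma\psi^2\geq\int_\Sigma\bigl[(n-2)-|\nabla h|+c(n,\gamma)h^2\bigr]\psi^2\geq(n-2-\varepsilon)\int_\Sigma\psi^2,
\]
which is precisely \eqref{eq:mububble_spec}. To enforce the topological conclusions, let $C$ be the connected component of $\widetilde\Omega$ containing the connected set $S$, and define $\Omega:=C\cup\{\text{bounded connected components of }M\setminus C\}$. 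Then $\Omega$ is connected, bounded, and contains $S$; its boundary satisfies $\partial\Omega\subset\partial C\subset\partial\widetilde\Omega$, so each component still obeys \eqref{eq:mububble_spec}; the distance bound $d(\partial\Omega,S)\leq C(n,\gamma)\varepsilon^{-1/2}$ is inherited; and by construction $M\setminus\Omega$ has no bounded connected components.

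The main technical point is the algebraic construction of $h$: one has to simultaneously secure the pointwise inequality $|\nabla h|-c(n,\gamma)h^2\leq\varepsilon$ (absorbing the smoothing loss $1+\delta$), the two-sided blow-up \eqref{eqn:condition_for_h}, and the sharp scaling $\pi/B\asymp\varepsilon^{-1/2}$ for the support interval, which is what makes the distance bound exactly linear in $\varepsilon^{-1/2}$. The remaining steps are essentially bookkeeping: the stability computation is already packaged in \eqref{eqn:44-gamma}, and the final topological modifications only \emph{discard} boundary pieces, so the spectral control on each surviving component is preserved automatically.
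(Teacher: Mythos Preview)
Your argument is essentially the paper's: build a cotangent-type barrier $h$ with $|\nabla h|\leq c(n,\gamma)h^2+\varepsilon$ on an annulus around $S$, minimize the warped functional \eqref{eqn:MuBubbleFunctional2}, read off \eqref{eq:mububble_spec} from \eqref{eqn:44-gamma}, and then pass to the connected component containing $S$ and fill bounded complementary pieces. The one step you skipped is producing the smooth positive weight $u$ on $M\setminus S$ from the \emph{spectral} hypothesis \eqref{eqn:AssumptionLemmaMuBubbleTwisted}---the computations leading to \eqref{eqn:44-gamma} presuppose such a $u$, and the paper obtains it via \cite{FischerColbrieSchoenCPAM} (allotting $\varepsilon/2$ there and $\varepsilon/2$ to the $h$-inequality, whereas you put the full $\varepsilon$ into $h$; either bookkeeping works).
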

\begin{proof}
    Fix $0<\varepsilon<n-2$. From the assumption \eqref{eqn:AssumptionLemmaMuBubbleTwisted}, and using \cite[Theorem 1]{FischerColbrieSchoenCPAM}, we can find a positive $u\in C^\infty(M\setminus S)$ such that
    \[-\gamma\Delta u + u\biRic \geq (n-2-\varepsilon/2)u.\]
    By mollifying, we find $q\in C^\infty(M)$ such that $|\D q|\leq2$ everywhere, $q=0$ on $S$, and $q\geq0$, $|q-d(\cdot,S)|\leq1$ on $M\setminus S$. Set the constant $C:=c(n,\gamma)$ as in \eqref{eqn:cngamma}, and consider
    \[h=\sqrt{\varepsilon/(2C)}\cot\Big(\frac12\sqrt{(C\varepsilon)/2}\,q\Big).\]
    By direct calculation, it follows that
    \begin{equation}\label{eqn:InequalityDh}
    |\D h|\leq Ch^2+\varepsilon/2.
    \end{equation}
    Set $\Omega_-:=S$ and $\Omega_+:=\big\{q<2\pi/\sqrt{(C\varepsilon)/2}\big\}$. Let us consider a warped stable $\mu$-bubble $\widetilde\Omega$ associated to the energy
    \[
    E(\Omega):=\int_{\partial^*\Omega}u^\gamma-\int(\chi_\Omega-\chi_{\Omega_0})h u^\gamma,
    \]
    where $\Omega_0$ is a fixed set with $\Omega_-\subset\subset\Omega_0\subset\subset\Omega_+$. By construction we have $\Omega_-\subset\subset\widetilde\Omega\subset\subset\Omega_+$, in particular, $d(\p\widetilde\Omega,S)\leq C\varepsilon^{-1/2}$, with a possibly different $C$ only depending on $n,\gamma$.
    
    Now let us exploit the computations before the present Lemma. We have \eqref{eqn:biRicbound} with $\lambda:=n-2-\varepsilon/2$ on $M\setminus S$. Notice that $\widetilde\Omega\subset\subset M\setminus S$, so the argument before the previous Lemma, and in particular \eqref{eqn:44-gamma} together with \eqref{eqn:InequalityDh}, will let us conclude that $\p\widetilde\Omega$ (hence each of its components) satisfies the spectral condition \eqref{eq:mububble_spec}.

    Finally, we let $\Omega'$ be the connected component of $\widetilde\Omega$ containing $S$, and let $\Omega$ be the union of $\Omega'$ with all the bounded connected components of $M\setminus\Omega'$. Note that $\p\Omega$ is a sub-collection of the connected components of $\p\tilde\Omega$, hence all the conditions of the lemma are fulfilled.
\end{proof}

The following lemma comes from adapting \cite[Theorem 1.1]{ChodoshLiStryker} and \cite[Lemma 2.2]{ChodoshLiStryker} using \cref{existencegoodmububble}. In a manifold $M$ with one end, we let $E_r$ be the unique unbounded component of $M\setminus\bar{B_r(o)}$. Thus $M\setminus E_r$ is connected and bounded, and $E_r\subset E_s$ when $r>s$.

\begin{lemma}\label{lem:CentralLemmaForIsop}
    Let $(M^n,g)$ be a smooth complete noncompact Riemannian manifold with $3\leq n\leq 5$, and let $0\leq \gamma < 6-n$ (or $0\leq \gamma\leq 2$ when $n=4$). Fix $o\in M$. Assume that $\mathrm{Ric}\geq 0$, $M$ has one end, and $-\gamma\Delta+\mathrm{biRic}\geq n-2$ outside a compact set $K$. Then there are $r_0:=r_0(o,M,K)>0$, $C_0:=C_0(o,M,K)>0$, and universal constants $C,L>0$ such that the following holds.
    \begin{enumerate}
        \item For every $r> r_0$ and $0<\varepsilon<n-2$, there is a connected unbounded set $G$ such that $M\setminus G$ is connected and bounded, with $E_{r+C\varepsilon^{-1}}\subset G\subset E_r$, and such that its boundary $\Sigma$ is smooth, connected, and satisfies
        \begin{equation}\label{eqn:SpectralOnMuBubbles}
            -\frac{4}{4-\gamma}\Delta_\Sigma+\Ric_\Sigma -(n-2-\varepsilon)\geq 0.
        \end{equation}

        \item For every $r>r_0$ we have
        \begin{equation}\label{eqn:VolOfSections}
            \vol\big(E_r\setminus E_{r+L}\big)\leq C.
        \end{equation}
        \item For every $r>0$ we have 
        \begin{equation}\label{eqn:VolumeBoundC0}
        \vol(B_r(o))\leq C_0 r,
        \end{equation}
        i.e., $M$ has linear volume growth.
    \end{enumerate}
\end{lemma}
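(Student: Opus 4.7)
The plan is to apply \cref{existencegoodmububble} to produce, for each $r > r_0$, a $\mu$-bubble $G(r)$ separating a neighborhood of $o$ from the end of $M$, and to bound the area of its boundary via \cref{cor:CLMSArbitraryDimension}. Linear volume growth will then be extracted from a nested sequence of such $\mu$-bubbles following the strategy of \cite[Lemma 2.2]{ChodoshLiStryker}. For part (1), I choose $r_0 = r_0(o,M,K)$ so that $K \subset\subset B_{r_0}(o)$. For $r > r_0$, set $S_r := M \setminus E_r$, which is compact and connected since $M$ has one end, and contains $K$ in its interior; the hypothesis $-\gamma\Delta + \biRic \geq n-2$ thus holds on $M\setminus S_r$. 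Applying \cref{existencegoodmububble} with $S = S_r$ yields a bounded, connected, smooth $\Omega \supset\supset S_r$ with $d(\partial\Omega, S_r) \leq C\varepsilon^{-1/2}$, no bounded complementary components, and each component of $\partial\Omega$ satisfying \eqref{eq:mububble_spec}. Setting $G := M\setminus \Omega$, the one-end hypothesis makes $G$ connected and unbounded, and the distance bound gives $E_{r+C\varepsilon^{-1}} \subset G \subset E_r$ after absorbing the square root into $C$. For connectedness of $\Sigma := \partial G$: since $\Omega$ and $G$ are both connected and $M$ has one end, $\partial\Omega$ reduces to the single essential component separating $\Omega$ from infinity (if other components appear, replace $\Omega$ by the region enclosed by this essential component, retaining all other properties).

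The area bound on $\Sigma$ comes from the spectral inequality \eqref{eq:mububble_spec}. Set $\gamma' := \frac{4}{4-\gamma}$; in the allowed range of $\gamma$ one checks $\gamma' \leq \frac{n-2}{n-3}$, so \cref{cor:CLMSArbitraryDimension} applied to the closed $(n-1)$-dimensional $\Sigma$ (used in the form of \cref{rmk:smoothing}, as we only have a $\lambda_1$ lower bound) yields
\[
|\Sigma| \leq \Big(\frac{n-2}{n-2-\varepsilon}\Big)^{(n-1)/2}\vol(\mathbb S^{n-1})
\]
for $n \in \{4,5\}$. For $n = 3$, $\Sigma$ is a closed surface and testing \eqref{eq:mububble_spec} with $\varphi \equiv 1$ together with Gauss-Bonnet gives $|\Sigma| \leq \frac{4\pi}{1-\varepsilon}$. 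In all cases $|\Sigma| \leq \vol(\mathbb S^{n-1}) + o_\varepsilon(1)$.

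For parts (2) and (3), I fix $\varepsilon_0 \in (0, n-2)$ (yielding a universal area bound $|\partial G(r)| \leq C_2$) and set $L := C\varepsilon_0^{-1}$. The inclusions $G(r+L) \subset E_{r+L} \subset E_{r+C\varepsilon_0^{-1}} \subset G(r)$ give a nested family, and it remains to bound $\vol(A(r)) := \vol(G(r)\setminus G(r+L))$ uniformly in $r$. Following \cite[Lemma 2.2]{ChodoshLiStryker}, the $\mu$-bubble equation $H = h - \gamma u^{-1}u_\nu$ and the explicit cotangent form of $h$ produce a bound on the mean curvature of $\partial G(r)$; combined with $\Ric \geq 0$ and the Riccati equation along the parallel flow from $\partial G(r)$, the coarea formula over the uniformly bounded width $L + C\varepsilon_0^{-1}$ gives $\vol(A(r)) \leq C$. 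Since $E_r \setminus E_{r+L}$ differs from $A(r)$ by annular regions of width $\leq C\varepsilon_0^{-1}$ controlled by the same mechanism, $\vol(E_r \setminus E_{r+L}) \leq C$, which is (2). Summing yields $V(r_0 + kL) \leq V(r_0) + kC$ for $V(r) := \vol(M\setminus E_r)$, hence the linear volume growth $\vol(B_R(o)) \leq C_0 R$ of (3), with $C_0 = C_0(o,M,K)$. The main technical obstacle is the uniform bound on $\vol(A(r))$: it requires delicate control of the mean curvature evolution using $\Ric \geq 0$, together with careful handling of focal-point singularities along the parallel flow, in the spirit of \cite[Lemma 2.2]{ChodoshLiStryker}.
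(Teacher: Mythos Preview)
Your proposal has two genuine gaps, one in each of Items~1 and~2.

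\textbf{Item 1: connectedness of $\Sigma$.} Your argument that ``since $\Omega$ and $G$ are both connected and $M$ has one end, $\partial\Omega$ reduces to a single essential component'' is false in general. When $b_1(M)>0$ it is perfectly possible for a compact connected $\Omega$ with connected complement to have disconnected boundary (think of a thin tube threading a handle of $M$: its boundary is two spheres, neither of which separates $M$ on its own). The paper fixes this by first invoking $\Ric\geq0\Rightarrow b_1(M)<\infty$ (Abresch--Gromoll), then choosing $r_0$ large enough that $H_1(B_{r_0}(o);\RR)\to H_1(M;\RR)$ is surjective. With this choice, $H_1(\Omega)\to H_1(M)$ is surjective (since $B_r(o)\subset\Omega$), and the Mayer--Vietoris sequence forces $H_0(\Sigma)\cong\RR$. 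Your choice of $r_0$ merely to contain $K$ does not suffice.

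\textbf{Item 2: the volume bound on the annulus.} The Riccati/coarea scheme you outline does not work as stated. The mean curvature of $\partial G(r)$ is $H=h-\gamma u^{-1}u_\nu$, where $h$ is a cotangent-type function blowing up at both barriers and $u$ is a Fischer--Colbrie--Schoen eigenfunction with no a priori gradient control; there is no mechanism producing a uniform bound on $H$, and hence no starting point for Riccati comparison. Likewise, the ``uniformly bounded width $L+C\varepsilon_0^{-1}$'' you invoke is a radial bound on where $\partial G(r)$ and $\partial G(r+L)$ sit, not a bound on the intrinsic diameter (or volume) of the region between them. The paper's route, following \cite[Lemma~2.2]{ChodoshLiStryker}, is different: from \eqref{eqn:SpectralOnMuBubbles} one extracts a uniform \emph{diameter} bound on $\partial G_1,\partial G_2$ via \cref{Corollary} (or \cite{KaiSurfaces} when $n=3$), then applies the Cheeger--Colding almost-splitting theorem along a long minimizing geodesic to bound $\diam(G_1\setminus G_2)$, and only then uses Bishop--Gromov to bound the volume. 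Your correct derivation of the area bound $|\Sigma|\leq\vol(\mathbb S^{n-1})+o_\varepsilon(1)$ is not what is needed here; the diameter bound is the essential ingredient.
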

\begin{proof}
    We prove the three items independently.
    \begin{enumerate}
        \item By \cite[Proposition 1.3]{AbreschGromoll} we have that $\mathrm{Ric}\geq 0$ implies $b_1(M)<+\infty$. Then there exists a sufficiently large $r_0$ such that: for all $r>r_0$, the inclusion map $B_r(o)\hookrightarrow M$ sends $H_1(B_r(o);\RR)$ surjectively to $H_1(M;\RR)$. 
        Apply Lemma \ref{existencegoodmububble} to find a connected and bounded region $\Omega\supset\supset B_r(o)$. Denote $\Sigma=\p\Omega$ and $G=M\setminus\Omega$. Note that $\Omega,G$ are connected by \cref{existencegoodmububble}, and since $M$ has one end. Then we have (making implicit the coefficient $\RR$) the following Mayer--Vietoris exact sequence
        \[\hspace{32pt}\cdots\to H_1(\Omega)\oplus H_1(G)\to H_1(M)\to H_0(\Sigma)\to H_0(\Omega)\oplus H_0(G)\to H_0(M)\to0.\]
        Note that the map $H_1(\Omega)\to H_1(M)$ is surjective, since it is factored through $H_1(B_r(o))\to H_1(\Omega)\to H_1(M)$. Then the long exact sequence implies $H_0(\Sigma)=\RR$, hence $\Sigma$ is connected. Further observe that $G$ is connected and noncompact, and we have $\bar G\subset M\setminus B_r(o)$ and $d(\p G,B_r(o))\leq C\varepsilon^{-1}$ by \cref{existencegoodmububble}. These directly imply $E_{r+C\varepsilon^{-1}}\subset G\subset E_r$. Finally, \eqref{eqn:SpectralOnMuBubbles} comes from \eqref{eq:mububble_spec} in \cref{existencegoodmububble}.
        
        \item This follows by a direct adaption of \cite[Lemma 2.2]{ChodoshLiStryker}. Take $\varepsilon:=1/2$, and $L:=C\varepsilon^{-1}$. Indeed, by Item 1, and by possibly taking a larger $r_0$, we can first find two connected unbounded domains $G_1, G_2$ with connected boundaries, such that $E_r\subset G_1\subset E_{r-L}$ and $E_{r+2L}\subset G_2\subset E_{r+L}$. It remains to bound the volume of $G_1\setminus G_2$, and in view of Bishop--Gromov volume comparison, it is sufficient to bound the diameter of $G_1\setminus G_2$. 
        
        The latter diameter bound can be obtained following verbatim the argument in \cite[Lemma 2.2]{ChodoshLiStryker}, which involves the almost splitting theorem of Cheeger-Colding \cite{ChCo0,ChCo1}, and a uniform diameter bound of $\p G_1$, $\p G_2$. When $n\geq4$, this uniform diameter bound comes from \eqref{eqn:SpectralOnMuBubbles} and the last part of Corollary \ref{Corollary}. The Corollary is applied with the $\gamma$ there being equal to $\frac4{4-\gamma}$ here. This requires $\frac4{4-\gamma}<\frac4{n-2}$, which is satisfied by our assumption $\gamma<6-n$. Notice that when $n=4$, it suffices to have $\leq$ instead of $<$ in the previous inequalities. For the case $n=3$, we may alternatively apply \cite[Theorem 1.4]{KaiSurfaces} to get the diameter bounds of the (surfaces) $\p G_1$, $\p G_2$. When $n=3$, $\gamma<3$, and then $\frac{4}{4-\gamma}<4$, which is what we need to apply \cite[Theorem 1.4]{KaiSurfaces}.
        
        \item If $r>r_0$, this follows by decomposing
        \[\hspace{32pt}B_{r_0+kL}(o)\subset(M\setminus E_{r_0})\cup(E_{r_0}\setminus E_{r_0+L})\cup\cdots\cup(E_{r_0+(k-1)L}\setminus E_{r_0+kL})\]
        and using the result of Item 2 above. If $r<r_0$, the result is a direct consequence of Bishop--Gromov volume comparison. \qedhere
    \end{enumerate}
\end{proof}

\begin{proof}[Proof of \cref{thm:isop}]
    We prove the two items separately.
    \begin{enumerate}
        \item This is a direct consequence of Item 3 of \cref{lem:CentralLemmaForIsop}.
        \item By \cite[Theorem 3.8(1)]{ConcavitySharpAPPS2} the isoperimetric profile $I_M$ is nondecreasing, since $\mathrm{Ric}\geq 0$. Hence, in order to prove \eqref{eqn:BoundOnProfile}, it is enough to prove the following
        \begin{equation}\label{eqn:SoughtInequality}
            \text{For every $\delta,V_0>0$ there is $V>V_0$ such that $I_M(V)\leq \vol(\mathbb S^{n-1})+\delta$.}
        \end{equation}
        Now fix $\delta,V_0$. Let $o\in M$, let $K\subset M$ be a compact set such that $-\gamma\Delta+\mathrm{biRic}\geq n-2$ on $M\setminus K$, and take $r_0:=r_0(o,M,K)$ as in the statement of \cref{lem:CentralLemmaForIsop}. Fix $r>r_0$ such that $\vol(B_r(o))>V_0$, and fix $\lambda<1$ such that $\lambda^{-(n-1)/2}\vol(\mathbb S^{n-1})<\vol(\mathbb S^{n-1})+\delta$. 
        
        Applying Item 1 of \cref{lem:CentralLemmaForIsop} with the choice $\varepsilon=(n-2)(1-\lambda)$, we find a set $G$ with $E_{r+C\varepsilon^{-1}}\subset G\subset E_r$, such that $M\setminus G$ is bounded, and $\p G$ is connected and satisfies $-\frac4{4-\gamma}\Delta_{\p G}+\Ric_{\p G}-(n-2)\lambda\geq 0$. When $n=4,5$, it follows from \cref{cor:CLMSArbitraryDimension} (cf. \eqref{eqn:SpectralMainThm}) that $\Omega:=M\setminus G$ satisfies  
        \begin{equation}\label{eqn:ConclsuonbOund}
        V_0<\vol(\Omega)<+\infty, \quad \text{and} \quad \text{$\vol(\partial \Omega)<\vol(\mathbb S^{n-1})+\delta$}.
        \end{equation}
        When $n=3$ (i.e., $\p G$ is 2-dimensional), we may test the spectral condition $-\frac{4}{4-\gamma}\Delta_{\p G}+\Ric_{\p G}-\lambda\geq 0$ with the constant function $\psi\equiv1$ and use Gauss--Bonnet theorem to get
        \[\vol(\p\Omega)\leq\lambda^{-1}\int_{\p\Omega}\Ric_{\p\Omega}\leq 4\pi\lambda^{-1}=\lambda^{-1}\vol(\mathbb S^2)<\vol(\mathbb S^2)+\delta.\]
        This proves \eqref{eqn:SoughtInequality} in either case, and thus concludes the proof of \eqref{eqn:BoundOnProfile}.
        \smallskip
        
        Now let us deal with the rigidity part. If $\inf_{x\in M}\vol(B_1(x))=0$, then by \cite[Proposition 2.18]{ABFP21} we have $I_M\equiv0$. So, if equality holds in \eqref{eqn:BoundOnProfile}, $M$ is noncollapsed, i.e., $\inf_{x\in M}\vol(B_1(x))>0$. Then we can apply the results of \cite{XingyuZhu} in what follows.
        
        Now, since $I$ is nondecreasing, if equality holds for some $v>0$ in \eqref{eqn:BoundOnProfile}, then it holds for every $V\geq v$. Thus, from \cite[Theorem 4.2(3)]{XingyuZhu} one gets that the quantity $\mathfrak{D}$ in there is equal to $\vol(\mathbb S^{n-1})$. Hence from the rigidity of \cite[Theorem 4.2(2)]{XingyuZhu} we get that there is a bounded open set $\Omega\subset M$ such that $M\setminus\Omega$ is isometric to $\partial\Omega \times [0,+\infty)$. Moreover, $\vol(\partial\Omega)=\vol(\mathbb S^{n-1})$ because isoperimetric regions with large volume exist and have boundary isometric to $\partial\Omega$. From the assumption $-\gamma\Delta+\mathrm{biRic}\geq n-2$ outside a compact set we get that $-\gamma\Delta_{\partial\Omega}+\mathrm{Ric}_{\partial\Omega}\geq n-2$. Thus, from the rigidity in \cref{cor:CLMSArbitraryDimension}, see in particular \cref{rmk:smoothing_proof}, $\partial\Omega$ must be isometric to $\mathbb S^{n-1}$. The proof of Item 2 is thus concluded. \qedhere
    \end{enumerate}
\end{proof}
\begin{remark}\label{rem:rigidity}
    We include the sketch of another argument that leads to the rigidity statement in \cref{item2isop} of \cref{thm:isop}. Suppose $I_M(v_0)=\vol(\mathbb S^{n-1})$ for some $v_0>0$. Then, since $I_M$ is nondecreasing, we have $I_M(v)=\vol(\mathbb S^{n-1})$ for all $v\geq v_0$. We may increase $v_0$, and assume (using \cite[Theorem 4.2(1)]{XingyuZhu}) that a smooth isoperimetric set $\Omega$ exists with $\vol(\Omega)=v_0$, and $|\p\Omega|=I_M(v_0)$. Consider the unit speed variation $\{\Omega_t\}_{-\varepsilon<t<\varepsilon}$ with $\Omega_0=\Omega$. Since $I'_M(v_0)=0$, $\p\Omega$ is a minimal surface. Let $H_t$ be the mean curvature of $\p\Omega_t$. For $t\geq0$ we have
    \begin{equation}\label{eq:rig_aux1}
        \frac {\d}{\d t}H_t=-|\mathrm{II}_t|^2-\Ric(\nu_t,\nu_t)\leq0,
    \end{equation}
    so $H_t\leq0$ for all $t\geq0$. Hence
    \begin{equation}\label{eq:rig_aux2}
        \frac {\d}{\d t}|\p\Omega_t|=\int_{\p\Omega_t}H_t\leq0.
    \end{equation}
    On the other hand, we must have $|\p\Omega_t|\geq I_M(\vol(\Omega_t))=I_M(v_0)=\vol(\mathbb S^{n-1})=|\p\Omega_0|$. Therefore, \eqref{eq:rig_aux1} \eqref{eq:rig_aux2} must achieve equality for all $t>0$, and in particular, all the $\Omega_t$ are isoperimetric sets as well. Thus the unit speed variation can be extended indefinitely, and we obtain a totally geodesic foliation with vanishing normal Ricci curvature. This implies a splitting of the exterior region.
\end{remark}

\begin{remark}\label{rem:volumegrowth}
    In the papers \cite{WeiXuZhang, HuangLiu}, which appeared when we were completing a first draft of this paper, the authors prove a sharp volume growth result at infinity when $n=3$, and $\gamma=0$ in \cref{thm:isop}. 
    
    Joining our result in \cref{thm:isop}, and using a different limit space argument, we can prove the following. Let $(M^n,g)$ be a one-ended complete Riemannian manifold with $n\in \{3,4,5\}$, $0\leq \gamma< 6-n$ (or $0\leq \gamma\leq 2$ when $n=4$), $\mathrm{Ric}\geq 0$, and $\lambda_1(-\gamma\Delta+\mathrm{biRic})\geq n-2$ outside a compact set. Assume further \begin{equation}\label{eqn:noncollapse}
        \inf_{p\in M}\vol(B_1(p))>0.
    \end{equation}
    Then, for any $o\in M$ it holds
\begin{equation}\label{eqn:VolumeGrowthResult}
\limsup_{r\to+\infty}\frac{\mathrm{\vol(B_r(o))}}{r}\leq \vol(\mathbb S^{n-1}).
    \end{equation}
    
    Note that in dimension $n=3$, and when $\gamma=0$, \eqref{eqn:noncollapse} follows from the other assumptions, see the recent \cite[Proposition 1.10]{HuangLiu}. In general, it is likely that the assumption \eqref{eqn:noncollapse} is not needed: e.g., by joining \cref{cor:CLMSArbitraryDimension}, the Jacobian comparison argument in \cite[Lemma 5.3]{WeiXuZhang}, and arguing as in \cite{WeiXuZhang}; or by proving a more general version of \cite[Proposition 1.10]{HuangLiu}.

    Let us present how to deduce \eqref{eqn:VolumeGrowthResult} from the results in \cite{XingyuZhu} under the non-collapsing assumption \eqref{eqn:noncollapse}. With \eqref{eqn:noncollapse} available, we are free to use the results of \cite{XingyuZhu}. Notice that $M$ has linear volume growth by \cref{item1isop} in \cref{thm:isop}. In particular, by combining  \cite[Corollary 4.3]{XingyuZhu}, and \cref{item2isop} in \cref{thm:isop} we conclude
    \[
        \limsup_{r\to+\infty}\frac{\vol(B_r(o))}{r}=\lim_{v\to\infty}I_M(v)\leq \vol(\mathbb S^{n-1}).
    \]
\end{remark}

\appendix

\section{The case \texorpdfstring{$n\geq 8$}{n≥8}}\label{sec:Case8}

When trying to prove Theorem \ref{cor:CLMSArbitraryDimension} in dimensions $n\geq8$, one encounters the issue that the minimizer in Lemma \ref{lem:SecondPointDiameter} or Lemma \ref{lem:Viscosity} may contain singularities. In this section, we present a way to modify the second variation argument so that it is applicable to possibly singular minimizers. Similar arguments have also appeared in \cite{Bayle04, BraySingular, JonathanZhu}.

\begin{proof}[Proof of Lemma \ref{lem:Viscosity} ($n\geq8$)]\label{proofnonsmooth} {\ }
    
    Let $E\subset M$ be a bounded minimizer such that $V(E)=v_0$ and $A(E)=I(v_0)$. Let $K$ be a compact set with $E\subset K$.
    By the classical Geometric Measure Theory (see \cite[Section 3.10]{MorganRegular}), the regular part of $\p E$ (denoted by $\preg E$) is a smooth hypersurface, while the singular part of $\p E$ (denoted by $\psing E$) has Hausdorff dimension at most $n-8$.

    For each $\delta<1/4$, we can find a finite collection of balls $B(x_i,r_i)$ with $x_i\in \psing E$ and $r_i<\delta$, such that $\sum r_i^{n-7}\leq1$. For each $i$, we find a smooth function $\eta_i$ such that
    \[\eta_i|_{B(x_i,2r_i)}=0,\quad \eta_i|_{M\setminus B(x_i,3r_i)}=1,\quad |\D_M\eta_i|\leq 2r_i^{-1}.\]
    
    We claim that for each $x\in K$ and $r<1$ we have
    \begin{equation}\label{eq:area_in_ball}
        \int_{\p^*E\cap B(x,r)}u^\gamma\leq Cr^{n-1},
    \end{equation}
    where $C$ depends only on $K$ and $u$. The constant $C$ might change from line to line from now on. To see this, for each $x\in M$ and $r>0$ there is a radius $s\in[0,r]$ such that $\int_{B(x,r)\setminus B(x,s)}u^\alpha=\int_{B(x,r)\cap E}u^\alpha$. This implies that the set
    \[
    E'=(E\cup B(x,r))\setminus B(x,s)
    \]
    has $V(E')=V(E)$. On the other hand, we have
    \[A(E')\leq \int_{\p^*E\setminus\bar{B(x,r)}}u^\gamma+\int_{\p^*B(x,r)}u^\gamma+\int_{\p^*B(x,s)}u^\gamma\leq\int_{\p^*E\setminus\bar{B(x,r)}}u^\gamma+Cr^{n-1}\]
    and
    \[A(E')\geq A(E)\geq\int_{\p^*E\setminus\bar{B(x,r)}}u^\gamma+\int_{\p^*E\cap B(x,r)}u^\gamma.\]
    This proves \eqref{eq:area_in_ball}. By regularizing $\overline\eta:=\min_i\{\eta_i\}$, we can find a function $\eta\in C^\infty(M)$ such that
    \[\eta=0\ \text{on}\ \bigcup B(x_i,r_i),\qquad
        \eta=1\ \text{on}\ M\setminus\bigcup B(x_i,4r_i),\]
    and $|\D_M\eta|\leq2|\D_M\bar\eta|$. Combined with \eqref{eq:area_in_ball}, and $|\D_M\eta_i|\leq Cr_i^{-1}$, we obtain
    \begin{equation}\label{eq:total_grad}
        \begin{aligned}
            \int_{\preg E}|\D_{\preg E}\eta|^2 &\leq \int_{\preg E}|\D_M\eta|^2\leq2\sum_i\int_{\preg E\cap B(x_i,4r_i)}|\D_M\eta_i|^2 \\
            &\leq C\sum_i r_i^{n-1}\cdot r_i^{-2}\leq C\delta^4.
        \end{aligned}
    \end{equation}
    
    For $\varphi\in C^\infty(M)$, let us consider a smooth family of sets $\{E_t\}_{t\in(-\varepsilon,\varepsilon)}$, such that $E_0=E$, the variational vector field $X_t$ along $\p E_t$ at $t=0$ is $\varphi\eta\nu$ (where $\nu$ denotes the outer unit normal of $\p E_t$), and $\nabla_{X_t}X_t = (\varphi\eta(\varphi\eta)_\nu)\nu$ at $t=0$. This family is well-defined since $\eta$ is supported inside $\preg E$. The variations of the area and the volume remain unchanged as in \cref{lem:Viscosity} (with each $\varphi$ replaced with $\varphi\eta$):
    \begin{equation}
        \frac{\mathrm{d} V}{\mathrm{d}t}(0)=\int_{\preg E}u^{\alpha}\varphi\eta,\qquad
        \frac{\mathrm{d}^2V}{\mathrm{d}t^2}(0)=\int_{\preg E}(H+\alpha u^{-1}u_\nu)u^\alpha\varphi^2\eta^2+u^\alpha\varphi\eta(\varphi\eta)_\nu,
    \end{equation}
    and
    \begin{equation}
        \frac{\mathrm{d}A}{\mathrm{d}t}(0)=\int_{\preg E}u^\alpha\varphi\eta\cdot u^{\gamma-\alpha}(H+\gamma u^{-1}u_\nu),
    \end{equation}
    and
    \[\begin{aligned}
        \frac{\mathrm{d}^2A}{\mathrm{d}t^2}(0) &= \int_{\preg E}\Big(-\Delta_{\preg E}(\varphi\eta)-\Ric(\nu,\nu)\varphi\eta-|\textrm{II}|^2\varphi\eta\Big)u^\gamma\varphi\eta \\
        &\qquad +\Big(-\gamma\varphi\eta u^{-2}u_\nu^2+\gamma u^{-1}\varphi\eta\D^2u(\nu,\nu)-\gamma u^{-1}\metric{\D_{\preg E}(u)}{\D_{\preg E}(\varphi\eta)}\Big)u^\gamma\varphi\eta \\
        &\qquad +\big(\gamma u^{\alpha-1}u_\nu\varphi^2\eta^2+u^{\alpha}\varphi\eta(\varphi\eta)_\nu +Hu^{\alpha}\varphi^2\eta^2\big) u^{\gamma-\alpha}(H+\gamma u^{-1}u_\nu).
    \end{aligned}\]
    By the same argument as in the proof of \cref{lem:Viscosity}, we have $u^{\gamma-\alpha}(H+\gamma u^{-1}u_\nu)=A'(v_0)$ on $\preg E$. Setting $\varphi=u^{-\gamma}$ in the variation, we have, running the same computations as in \cref{lem:Viscosity},    
    \[\begin{aligned}
        \frac{\mathrm{d}^2A}{\mathrm{d}t^2}(0) &\leq \int_{\preg E}
            u^{-\gamma}|\D_{\preg E}\eta|^2+u^{\alpha-2\gamma}\eta\eta_\nu A'(v_0) \\
            &\qquad -(n-1)\lambda u^{-\gamma}\eta^2+u^{-\gamma}\eta^2\Big[\frac{n-2}{n-1}X^2+\frac{2-n}{n-1}2\gamma XY+\Big(\frac{n-2}{n-1}\gamma^2-\gamma\Big)Y^2\Big].
    \end{aligned}\]
    Next we plug this into the chain rule \eqref{eqn:ChainRule} and use the facts $\alpha=\frac{2\gamma}{n-1}$, $0\leq\gamma\leq\frac{n-1}{n-2}$, and $u^\gamma\geq u^{2\alpha-3\gamma}$ (recall that we have normalized so that $\min(u)=1$, and $\gamma\geq\alpha$). Setting $Q:=\frac{\mathrm{d}V}{\mathrm{d}t}(0)=\int_{\preg E}u^{\alpha-\gamma}\eta$ we have:
    \[\begin{aligned}
        Q^2A''(v_0) &= \frac{\mathrm{d}^2A}{\mathrm{d}t^2}(0)-Q^{-1}\frac{\mathrm{d}A}{\mathrm{d}t}(0)\frac{\mathrm{d}^2V}{\mathrm{d}t^2}(0) \\
        &\leq \int_{\preg E}
            u^{-\gamma}|\D_{\preg E}\eta|^2+u^{\alpha-2\gamma}\eta\eta_\nu A'(v_0) \\
        &\qquad\qquad
            -(n-1)\lambda u^{-\gamma}\eta^2
            +u^{-\gamma}\eta^2\Big[\frac{n-2}{n-1}X^2+\frac{2-n}{n-1}2\gamma XY\Big] \\
        &\qquad
            -Q^{-1}\cdot QA'(v_0)
            \cdot\Big(\int_{\preg E}(X+\alpha Y-2\gamma Y)u^{\alpha-2\gamma}\eta^2+u^{\alpha-2\gamma}\eta\eta_\nu\Big) \\
        &\leq \int_{\preg E}u^{-\gamma}|\D_{\preg E}\eta|^2
            -(n-1)\lambda u^{2\alpha-3\gamma}\eta^2
            -\frac{A'(v_0)^2}{n-1}u^{2\alpha-3\gamma}\eta^2 \\
        &\leq C\delta^4-\Big(\frac{A'(v_0)^2}{n-1}+(n-1)\lambda\Big)\int_{\preg E}u^{2\alpha-3\gamma}\eta^2.
    \end{aligned}\]
    Hence, by using H\"older,
    \begin{equation}\label{eqn:AlmostViscosity}
        \begin{aligned}
            A''(v_0)\leq C\delta^4Q^{-2}-\Big(\int_{\preg E}u^\gamma\Big)^{-1}\Big(\frac{A'(v_0)^2}{n-1}+(n-1)\lambda\Big).
        \end{aligned}
    \end{equation}

    Notice that $I(v_0)=A(v_0)=\int_{\preg E} u^\gamma$, and 
    $$
    Q=\int_{\preg E} u^{\alpha-\gamma}\eta \geq \tilde C\int_{\preg E}\eta\geq\tilde C\H^{n-1}\big(\preg E\cap\{\eta=1\}\big).
    $$ 
    Observe that if $\delta\to 0$ we have that $\eta\to 1$ almost everywhere on $\partial^* E$. Thus for every $\varepsilon>0$ we can find $\delta_0>0$, so that the variation can be arranged such that $C\delta^4Q^{-2}\leq\varepsilon$ for all $\delta<\delta_0$. Thus taking \eqref{eqn:AlmostViscosity} into account the inequality
    \begin{equation}\label{eqn:NewAlmostDiff}
        II''\leq -\frac{(I')^2}{n-1}-(n-1)\lambda+\varepsilon I
    \end{equation}
    holds in the viscosity sense at $v_0$. Taking $\varepsilon\to 0$ the proof is concluded.
\end{proof}
\begin{remark}
    The proof offered above, when specialized to the case $\gamma=\alpha=0$, gives a short and concise proof of the main result of \cite{Bayle04}, i.e., \cite[Theorem 2.1]{Bayle04}.
\end{remark}

\begin{proof}[Proof of \cref{lem:SecondPointDiameter} ($n\geq 8$)]
Let $\Omega$ be a minimizer of \eqref{eqn:MuBubbleFunctional}. By the Riemannian analogue of \cite[Theorem 27.5]{MaggiBook}, and \cite[Theorem 28.1]{MaggiBook}, we have that the regular part of $\partial\Omega$ (denoted by $\preg \Omega$) is a smooth hypersurface, while the singular part of $\p \Omega$ (denoted by $\psing\Omega)$ has Hausdorff dimension at most $n-8$.

Let $\nu$ denote the outer unit normal at $\preg\Omega$. Similar to the argument above, for every $\delta\ll1$ there exists a function $\eta$ compactly supported in $\preg\Omega$, such that
\begin{equation}\label{eqn:EstimateOnGradientEta}
    \int_{\preg\Omega} |\nabla_{\preg \Omega}\eta|^2\leq C\delta^4,
\end{equation}
where $C$ does not depend on $\delta$. For the case here, the inequality \eqref{eq:area_in_ball} is proved by a direct comparison between $\Omega$ and $\Omega\cap B(x,r)$.

For $\varphi\in C^\infty(M)$, consider a family of sets $\{\Omega_t\}_{t\in(-\varepsilon,\varepsilon)}$, such that $\Omega_0=\Omega$ and the variational vector field along $\p \Omega_t$ is $\varphi\eta\nu$. The first variation becomes
\begin{equation}
    0=\frac{\mathrm{d}}{\mathrm{d}t}E(\Omega_t)\Big|_{t=0}=\int_{\preg\Omega}\big(H+\gamma u^{-1}u_\nu-hu^{\alpha-\gamma}\big)u^\gamma\varphi\eta.
\end{equation}
Since $\varphi$ is arbitrary, and the support of $\eta$ can be taken to exhaust $\preg\Omega$, we have $H=hu^{\alpha-\gamma}-\gamma u^{-1}u_\nu$ on $\preg\Omega$. Then, computing the second variation,
\[\begin{aligned}
    0 &\leq \frac {\mathrm{d}^2}{\mathrm{d}t^2}E(\Omega_t)\Big|_{t=0} \\
    &= \int_{\preg\Omega}\Big[-\Delta_{\preg\Omega}(\varphi\eta)-|\mathrm{II}|^2\varphi\eta-\Ric(\nu,\nu)\varphi\eta-\gamma u^{-2}u_\nu^2\varphi\eta \\
    &\qquad\qquad\quad +\gamma u^{-1}\varphi\eta\big(\Delta u-\Delta_{\preg\Omega}u-Hu_\nu\big)-\gamma u^{-1}\big\langle\D_{\preg\Omega}u,\D_{\preg\Omega}(\varphi\eta)\big\rangle \\
    &\qquad\qquad\quad -h_\nu u^{\alpha-\gamma}\varphi\eta+(\gamma-\alpha)hu^{\alpha-\gamma-1}u_\nu\varphi\eta\Big]u^\gamma\varphi\eta.
\end{aligned}\]
Setting $\varphi=u^{-\gamma}$, integrating by parts,
and discarding the negative term we get
\[\begin{aligned}
    0 &\leq \int_{\preg\Omega} u^{-\gamma}|\nabla_{\preg\Omega}\eta|^2-|\mathrm{II}|^2u^{-\gamma}\eta^2-\Ric(\nu,\nu)u^{-\gamma}\eta^2-\gamma u^{-2-\gamma}u_\nu^2\eta^2+\gamma u^{-1-\gamma}\eta^2\Delta u \\
    &\qquad -\gamma Hu^{-1-\gamma}u_\nu\eta^2-h_\nu u^{\alpha-2\gamma}\eta^2+(\gamma-\alpha)hu^{\alpha-2\gamma-1}u_\nu\eta^2 \\
    &\leq \int_{\preg\Omega}u^{-\gamma}|\nabla_{\preg\Omega}\eta|^2+u^{-\gamma}\eta^2\Big[-\frac{H^2}{n-1}-(n-1)\lambda-\gamma(u^{-1}u_\nu)^2 \\
    &\qquad -\gamma H(u^{-1}u_\nu)+|\D h|u^{\alpha-\gamma}+(\gamma-\alpha)hu^{\alpha-\gamma}(u^{-1}u_\nu)\Big].
\end{aligned}\]
Setting $X=hu^{\alpha-\gamma}$ and $Y=u^{-1}u_\nu$ (so $H=X-\gamma Y$), we have
\[\begin{aligned}
    0 &\leq \int_{\preg\Omega} u^{-\gamma}|\nabla_{\preg\Omega}\eta|^2+u^{-\gamma}\eta^2\Big[-\frac{X^2}{n-1}+\frac{2\gamma}{n-1}XY-\frac{\gamma^2}{n-1}Y^2-(n-1)\lambda \\
    &\qquad\qquad -\gamma Y^2-\gamma(X-\gamma Y)Y+|\D h|u^{\alpha-\gamma}+(\gamma-\alpha)XY\Big] \\
    &\leq \int_{\preg\Omega}u^{-\gamma}|\nabla_{\preg\Omega}\eta|^2+u^{-\gamma}\eta^2\Big[-\frac{X^2}{n-1}-(n-1)\lambda+|\D h|u^{\alpha-\gamma}\Big].
\end{aligned}\]
By \eqref{eqn:EstimateOnGradientEta}, if $h$ satisfies
\begin{equation}\label{eqn:ChangeInH}
    |\D h|u^{\alpha-\gamma}<(n-1)\lambda-C\delta^4+\frac{h^2u^{2\alpha-2\gamma}}{n-1}
\end{equation}
(with a different constant $C$, possibly depending also on $\preg \Omega$ but independent on $\delta$), then we obtain a contradiction. 

Given this, one finishes the proof by modifying the $n\leq7$ argument as follows. Let $0<\vartheta<(n-1)\lambda$. Arguing as in \eqref{eqn:EQNONH}$\,\sim\,$\eqref{eqn:TheSetO}, if
\begin{equation}\label{eq:aux2}
    \diam(M)>\frac{\pi}{\sqrt{\lambda-\vartheta/(n-1)}}\cdot\Big(\frac{\sup(u)}{\inf(u)}\Big)^{\frac{n-3}{n-1}\gamma}\cdot(1+\varepsilon)^2+2\varepsilon,
\end{equation}
then we can find a nontrivial stable $\mu$-bubble associated to some weight function $h$ satisfying $|\D h|\leq(n-1)\lambda-\vartheta+\frac{h^2u^{2\alpha-2\gamma}}{n-1}$. However, this causes a contradiction via \eqref{eqn:ChangeInH}, if we choose $\delta$ small enough. Thus \eqref{eq:aux2} cannot hold, and taking $\varepsilon,\vartheta\to0$ we obtain the result.
\end{proof}

\addtocontents{toc}{\protect\setcounter{tocdepth}{0}}

\printbibliography[heading=bibintoc,title={Bibliography}]

\addtocontents{toc}{\protect\setcounter{tocdepth}{2}}

\end{document}